\newtheorem{thm}{Theorem}[section]
\newtheorem{lemma}[thm]{Lemma}
\newtheorem{prop}[thm]{Proposition}
\theoremstyle{definition}
\newtheorem{defi}[thm]{Definition}
\newtheorem{examples}[thm]{Examples}
\newtheorem{remark}[thm]{Remark}
\newtheorem{notation}[thm]{Notation}
\DeclareMathOperator{\Ext}{Ext}
\DeclareMathOperator{\Hom}{Hom}
\DeclareMathOperator{\image}{Im}
\def\Pn{{\mathbb{P}^n}}
\def\CC{{\mathbb{C}}}
\def\PP{{\mathbb{P}}}
\def\OO{{\mathcal{O}}}
\def\UU{{\mathcal{U}}}
\def\GG{{\mathbb{G}}}
\def\rk{{\mathrm{rk}}}
\begin{document}
\hyphenation{Sch-war-zen-ber-ber}

\title{Schwarzenberger bundles on smooth projective varieties}
\author{Enrique Arrondo, Simone Marchesi and Helena Soares}
\date{}
\maketitle

\abstract{\noindent We define Schwarzenberger bundles on any smooth projective variety $X$. We introduce the notions of jumping pairs of a Steiner bundle $E$ on $X$ and determine a bound for the dimension of its jumping locus. We completely classify Steiner bundles whose set of jumping pairs have maximal dimension, proving that they are all Schwarzenberger bundles.}
\section*{Introduction}

Inspired by a particular family of rank $n$ vector bundles on $\PP^n$ which had been introduced by Schwarzenberger in \cite{Sc}, Steiner bundles on the projective space $\PP^n$ were defined by Dolgachev and Kapranov in \cite{DK}. In this paper, as well as in \cite{V} and \cite{AO},  the authors assign a Steiner bundle on $\PP^n$ to a certain configuration of hyperplanes. Moreover, given a family of special hyperplanes, which Vallès, Ancona and Ottaviani designate by unstable hyperplanes, they are able to reconstruct a Steiner bundle $E$ and determine whether $E$ is a Schwarzenberger bundle.

Schwarzenberger bundles on the projective space of arbitrary rank were recently introduced in \cite{A}. In this work, a generalization of unstable hyperplanes of Steiner bundles, now called jumping hyperplanes, is given. After determining a range for the dimension of the jumping locus, Steiner bundles whose locus has maximal dimension are classified and proved to be all Schwarzenberger. In light of the definition of a Steiner bundle on smooth projective varieties $X$ given in \cite{MS}, the results in \cite{A} were extended in \cite{AM} for the Grassmannian variety.

Our goal in the present paper is to generalize the results in \cite{A} and \cite{AM} to any smooth projective variety.

In Section 1 we recall the definition of Steiner bundles on smooth projective varieties and give an equivalent definition using linear algebra.

In Section 2 we propose a definition of Schwarzenberger bundles on smooth projective varieties (see Definition \ref{defSch}), which generalizes the one given in \cite{A} and \cite{AM}.

In Section 3 we define jumping pair for a Steiner bundle on $X$ (see Definition \ref{defjump}), endow the set of all jumping pairs with the structure of an projective variety and give a lower bound for its dimension.

In Section 4 we obtain an upper bound for the dimension of the jumping variety by studying its tangent space at a fixed jumping pair (see Theorem \ref{thm-uppbound}).

In Section 5 we prove a complete classification of Steiner bundles whose jumping locus has maximal dimension and show that they all are Schwarzenberger bundles (see Theorem \ref{thm-classif}).\vspace{3mm}\\

\noindent \textbf{Acknowledgements}. The three authors were partially supported  by Funda\c{c}\~ao para a Ci\^encia e Tecnologia, project ``Geometria Algébrica em Portugal'', PTDC/MAT/099275/2008; and by Ministerio de Educación y Ciencia de España, project "Variedades algebraicas y analíticas y aplicaciones", MTM2009-06964. The second author is supported by the FAPESP postdoctoral grant number 2012/07481-1. The third author is also partially supported by BRU - Business Research Unit, ISCTE-IUL. Parts of this work were done in UCM-Madrid, IST-Lisbon and UNICAMP-Campinas. The authors would like to thank Margarida Mendes Lopes and Marcos Jardim for the invitation and the warm hospitality.

\section{Steiner bundles on smooth projective varieties}

In this section we recall the definition of Steiner bundles on smooth projective varieties introduced in \cite{MS} and we study some of their properties needed in the sequel.

Let us first fix some notation.

\begin{notation}
We will always work over a fixed algebraically closed field $k$ of characteristic zero and $X$ will always denote a smooth projective variety over $k$.

The projective space $\mathbb{P}(V)$ will be the set of hyperplanes of a vector space $V$ over $k$ or, equivalently, the set of lines in the dual vector space of $V$, denoted by $V^*$.

We will write $\GG(r-1,\PP(V))$ for the Grassmann variety of $(r-1)$-linear subspaces of the projective space $\PP(V)$. This is equivalent to consider $G(r,V^*)$, the set of $r$-dimensional subspaces of the vector space $V^*$.

The dual of a coherent sheaf $E$ on $X$ will be denoted by $E^\vee$.
If $E$ is a vector bundle on $X$ then, for each $x \in X$, $E_x$ is the fibre over $x$.
\end{notation}

In order to define Steiner bundles on a smooth projective variety $X$ we recall the notion of a strongly exceptional pair of coherent sheaves on $X$.
\begin{defi}
Let $X$ be a smooth projective variety. A coherent sheaf $E$ on $X$
is \emph{exceptional} if
\begin{gather*}
\mathrm{Hom}(E,E)\simeq k,  \\
\mathrm{Ext}^{i}(E,E)=0, \text{ for all } i\geq 1.
\end{gather*}
An ordered pair $(E,F)$ of coherent sheaves on $X$ is called an \emph{exceptional pair} if both $E$ and $F$ are exceptional and
\[
\mathrm{Ext}^{p}(F,E)= 0, \text{ for all }p\geq 0.
\]
If, in addition,
\[
\mathrm{Ext}^{p}(E,F)= 0 \text{ for all } p\neq 0,
\]
we say that $(E,F)$ is a \emph{strongly
exceptional pair}.
\end{defi}

\begin{defi}\label{defSteiner}
Let $X$ be a smooth projective variety. An \emph{$(F_0,F_1)$-Steiner bundle} $E$ on $X$ is a vector bundle on $X$ defined by an exact sequence of the form
$$0\to S\otimes F_0\to T \otimes F_1\to E\to 0,$$
where $S$ and $T$ are vector spaces over $k$ of dimensions $s$ and $t$, respectively, and $(F_0,F_1)$ is an ordered pair of vector bundles on $X$ satisfying the two following conditions:
\begin{enumerate}
\item[(i)] $(F_0,F_1)$ is strongly exceptional;
\item[(ii)] $F_0^{\vee}\otimes F_1$ is generated by global sections.
\end{enumerate}
\end{defi}

\begin{examples}\label{exSteiner}\text{ }
\begin{itemize}
\item[(a)] A Steiner bundle according to Dolgachev and Kapranov \cite{DK} is an $(\mathcal{O}_\Pn(-1),\mathcal{O}_\Pn)$-Steiner bundle in the sense of Definition \ref{defSteiner}. More generally, vector bundles $E$ with a resolution of type
    $$0\to \mathcal{O}_\Pn(a)^s\to \mathcal{O}_\Pn(b)^t\to E\to 0,$$
    where $1\leq b-a\leq n$, are $(\mathcal{O}_\Pn(a),\mathcal{O}_\Pn(b))$-Steiner bundles on $\Pn$ (see \cite{MS}).
\item[(b)] Consider the smooth hyperquadric $Q_n\subset\mathbb{P}^{n+1}$, $n\geq 2$, and let $\Sigma_*$ denote the Spinor bundle $\Sigma$ on $Q_n$ if $n$ is odd, and one of the Spinor bundles $\Sigma_+$ or $\Sigma_-$ on $Q_n$ if $n$ is even. The vector bundle $E$ on $Q_n$ defined by an exact sequence of the form
    $$0\to \mathcal{O}_{Q_n}(a)^s\to \Sigma_*(n-1)^t\to E\to 0,$$
    for some $0\leq a\leq n-1$, is an $(\mathcal{O}_{Q_n}(a), \Sigma_*(n-1))$-Steiner bundle (see \cite{MS}).
    \item[(c)] Any exact sequence of vector bundles on the Grassmann variety $\GG:=\GG(r-1,\PP(V))$ of the form
    \[
    0\to \UU^s\to \mathcal{O}_{\GG}^t\to E\to 0,
    \]
    where $\UU$ denotes the rank $r$ universal subbundle of $\GG$, defines a $(U, \mathcal{O}_{\GG})$-Steiner bundle $E$ on $\GG$. These bundles were studied by Arrondo and Marchesi in \cite{AM}.
    \item[(d)]
Let $X = \tilde{\PP}^2 \backslash \{p_1,p_2,p_3\}$ be the blow up of $\PP^2$ at three points $p_1$, $p_2$ and $p_3$. Let $K_X=-3L+E_1+E_2+E_3$ denote the canonical divisor, where $L$ is the divisor corresponding to a line not passing through any of the three points and $E_i$ is the exceptional divisor of the blow up at the point $p_i$, $i=1,2,3$.
Take $F_0 = -2L + E_1 + E_2 + E_3$ and $F_1 = \OO_X$.\\
Notice that $H^0(F_0^\vee)$ is globally generated and is the set of conics that pass through three points. In particular, $h^0(F_0^\vee) = 3 = \dim X +1$.

Let us now prove that the pair of vector bundles $(F_0,F_1)$ is strongly exceptional.
Since both $F_0$ and $F_1$ are line bundles on a projective variety, the fact that $\Hom(F_0,F_0) =\Hom(\OO_X,\OO_X) =  \CC$ and $\Ext^i(F_0,F_0)=\Ext^i(\OO_X,\OO_X)=0$, $i=1,2$, is straightforward.

Using Riemann-Roch formula we obtain $\chi(F_0^\vee) = 3$ and thus $h^1(F_0^\vee) = h^2(F_0^\vee)$. Since $H^2(F_0^\vee) = H^0(K_X - F_0) = H^0 (-L) = 0$, we get that $\Ext^i(F_0,\OO_X) = H^i(F_0^\vee)=0$, for $i=1,2$.

From the fact that $H^0(F_0^\vee)\neq 0$ and $\Hom(F_0,F_0)=H^0(F_0\otimes F_0^\vee)\neq 0$ it follows that $\Hom(\OO_X,F_0)$ must be trivial. Furthermore, $\Ext^2(\OO_X,F_0)=H^2(F_0)=H^0(-F_0+K_X)=H^0(-L)=0$. Then also $\Ext^1(\OO_X,F_0)=H^1(F_0)=0$ because one can check that $\chi(F_0) = 0$.

So, we have just proved that any vector bundle $E$ fitting in a sequence of type
\[
0\to F_0^s\to \OO_X^t\to E\to 0
\]
is an $(F_0,\OO_X)$-Steiner bundle on the blow up $X$.
\end{itemize}
\end{examples}

The following proposition gives a characterization of $(F_0,F_1)$-Steiner bundles on a smooth projective variety $X$ by means of linear algebra (recall also Lemma 1.2 in \cite{A} or Lemma 1.7 in \cite{AM}). This interpretation will play an essential role for studying Schwarzenberger bundles on $X$.

\begin{prop}\label{linalg}
To give an $(F_0,F_1)$-Steiner bundle on a smooth projective variety $X$
$$0\to S\otimes F_0\to T \otimes F_1\to E\to 0,$$
is equivalent to give a linear map $\varphi:T^*\to S^*\otimes H^0(F_0^{\vee}\otimes F_1)$ such that, for each $x\in X$,
the induced linear map
$$\widetilde{\varphi}_x:T^*\otimes (F_1)_x^*\to S^*\otimes {(F_0^\vee)}_x$$
is surjective.
\end{prop}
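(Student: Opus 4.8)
The strategy is to translate the exact sequence defining a Steiner bundle into linear-algebraic data, and to show that the surjectivity of the bundle map on the left is equivalent to the fibrewise surjectivity condition on $\widetilde\varphi_x$. The starting observation is that giving the leftmost map $\alpha\colon S\otimes F_0\to T\otimes F_1$ in the defining sequence is the same as giving an element of $\Hom(S\otimes F_0, T\otimes F_1)$. I would compute this Hom-space using the strong exceptionality hypothesis. Since $(F_0,F_1)$ is strongly exceptional, we have $\Hom(F_0,F_1)=H^0(F_0^\vee\otimes F_1)$ and the higher Ext groups between the relevant bundles vanish, so that
$$\Hom(S\otimes F_0,\,T\otimes F_1)\;\simeq\;S^*\otimes T\otimes H^0(F_0^\vee\otimes F_1).$$
Reinterpreting $S^*\otimes T$ as $\Hom(T^*,S^*)$, a map $\alpha$ is thus the same datum as a linear map $\varphi\colon T^*\to S^*\otimes H^0(F_0^\vee\otimes F_1)$. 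This sets up the claimed correspondence between $\alpha$ and $\varphi$; the first task is simply to make this identification precise and functorial.

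The heart of the argument is to show that $E=\operatorname{coker}\alpha$ is a vector bundle (equivalently, $\alpha$ is a subbundle inclusion, i.e. injective with locally free cokernel) if and only if each $\widetilde\varphi_x$ is surjective. First I would dualize and localize: the sequence $0\to S\otimes F_0\to T\otimes F_1\to E\to 0$ yields a vector bundle $E$ exactly when the fibre map $\alpha_x\colon S\otimes (F_0)_x\to T\otimes (F_1)_x$ is injective for every $x\in X$, since injectivity on all fibres forces the cokernel to be locally free of constant rank. The point is then to recognise $\widetilde\varphi_x$ as the dual of $\alpha_x$, up to the natural isomorphisms. Concretely, evaluation of global sections gives for each $x$ a map $H^0(F_0^\vee\otimes F_1)\to (F_0^\vee)_x\otimes (F_1)_x=\Hom((F_0)_x,(F_1)_x)$, and composing $\varphi$ with this evaluation and contracting recovers the fibre map $\alpha_x$. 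Tracing through the identifications, the dual $(\alpha_x)^*\colon T^*\otimes (F_1)_x^*\to S^*\otimes (F_0)_x^*=S^*\otimes (F_0^\vee)_x$ is precisely $\widetilde\varphi_x$. Hence $\alpha_x$ is injective for all $x$ if and only if $(\alpha_x)^*=\widetilde\varphi_x$ is surjective for all $x$, which completes the equivalence.

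The main obstacle I anticipate is the careful bookkeeping in the second paragraph: verifying that the map $\widetilde\varphi_x$ built from $\varphi$ by global-section evaluation genuinely coincides with the dual of the fibre map $\alpha_x$, rather than merely resembling it. This requires pinning down the natural isomorphisms $(F_0^\vee)_x\simeq (F_0)_x^*$ and the behaviour of the evaluation map $H^0(F_0^\vee\otimes F_1)\otimes (F_1)_x^*\to (F_0^\vee)_x$ under the tensor and dualization operations, and checking that these commute with tensoring by the fixed vector spaces $S$ and $T$. Once this compatibility is established, the equivalence "injective fibre map $\Leftrightarrow$ surjective dual" is the elementary statement that a linear map is injective iff its transpose is surjective, and the reduction from "$E$ is a bundle" to "$\alpha_x$ injective for all $x$" is a standard constancy-of-rank argument.
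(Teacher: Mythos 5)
Your proposal is correct and takes essentially the same route as the paper: both establish the bijection between bundle maps $S\otimes F_0\to T\otimes F_1$ and linear maps $\varphi\colon T^*\to S^*\otimes H^0(F_0^{\vee}\otimes F_1)$ (the paper by dualizing to $\widetilde{\varphi}\colon T^*\otimes F_1^{\vee}\to S^*\otimes F_0^{\vee}$ and then twisting by $F_1$ and taking cohomology, you by computing the $\Hom$-space directly), and both then reduce the exactness of the defining sequence to fibrewise injectivity of the bundle map, equivalently surjectivity of its transpose $\widetilde{\varphi}_x$. One cosmetic remark: the identification $\Hom(F_0,F_1)\cong H^0(F_0^{\vee}\otimes F_1)$ holds for any pair of vector bundles, so your appeal to strong exceptionality at that step is unnecessary (though harmless).
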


\begin{proof}
Dualizing the sequence defining the Steiner bundle $E$, we see that to give a map $S\otimes F_0\to T \otimes F_1$ is the same as to give a map $\widetilde{\varphi}:T^*\otimes F_1^{\vee}\to S^*\otimes F_0^{\vee}$.

Twisting by $F_1$, taking cohomology and using condition (ii) of Definition \ref{defSteiner}, this is clearly equivalent to a linear map $\varphi:T^*\to S^*\otimes H^0(F_0^{\vee}\otimes F_1)$ with fibers $\varphi_x:T^*\to S^*\otimes H^0({(F_0^\vee)}_x\otimes (F_{1})_x)\cong S^*\otimes {(F_0^\vee)}_x\otimes (F_{1})_x$. Hence, $\varphi_x$ induces a linear map $\widetilde{\varphi}_x:T^*\otimes (F_1)_x^*\to S^*\otimes {(F_0^\vee)}_x$ and, moreover, the map $S\otimes F_0\to T \otimes F_1$ is injective if and only if $\widetilde{\varphi}_x$ is surjective for each $x\in X$.

\end{proof}

In what follows, $\varphi$ will always denote the linear map associated to an $(F_0,F_1)$-Steiner bundle introduced in Proposition \ref{linalg}.

\begin{lemma}\label{deflemma}
Let $E$ be an $(F_0,F_1)$-Steiner bundle on a smooth projective variety $X$. Then the following properties are equivalent:
\begin{itemize}
\item[(i)] $\varphi$ is injective.
\item[(ii)] $H^0(E^{\vee}\otimes F_1)=0$.
\item[(iii)] $E$ cannot split as $E_K\oplus(K^*\otimes F_1)$, where $K\subset \ker\varphi\subset T^*$ is a vector space.
\end{itemize}
Moreover, if $\varphi$ is not injective then $E_K$ is the $(F_0,F_1)$-Steiner bundle corresponding to the map $T^*/K\to S^*\otimes H^0(F_0^\vee\otimes F_1)$. In particular, when $K=\ker\varphi$, there is a splitting $E=E_{\ker\varphi}\oplus ((\ker\varphi)^*\otimes F_1)$, $E_{\ker\varphi}$ is the $(F_0,F_1)$-Steiner bundle corresponding to the inclusion $\image\varphi\hookrightarrow S^*\otimes H^0(F_0^\vee\otimes F_1)$ and $H^0(E_{\ker\varphi}^\vee\otimes F_1)=0$.
\end{lemma}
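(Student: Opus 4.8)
The plan is to establish the equivalences $(i)\Leftrightarrow(ii)\Leftrightarrow(iii)$ by translating each condition into a statement about the linear map $\varphi$, and then to verify the splitting description directly from the defining sequence. First I would handle $(i)\Leftrightarrow(ii)$ by dualizing the Steiner sequence and identifying $H^0(E^\vee\otimes F_1)$ with a cohomological invariant attached to $\varphi$. Applying $\HHom(-,F_1)$ to $0\to S\otimes F_0\to T\otimes F_1\to E\to 0$ and taking global sections yields a long exact sequence; using the strong exceptionality of $(F_0,F_1)$ (so that $\Hom(F_1,F_0)=0$ and $\Hom(F_1,F_1)\cong k$) together with condition (ii) of Definition \ref{defSteiner}, the connecting maps collapse and $H^0(E^\vee\otimes F_1)$ becomes exactly the kernel of the map $T^*\to S^*\otimes H^0(F_0^\vee\otimes F_1)$, which is $\varphi$. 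Hence $H^0(E^\vee\otimes F_1)=\ker\varphi$, giving $(i)\Leftrightarrow(ii)$ immediately.

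For $(i)\Leftrightarrow(iii)$, I would argue that a nonzero element of $\ker\varphi$ produces a splitting and conversely. If $K\subseteq\ker\varphi$ is a subspace, then $\varphi$ factors through the quotient $T^*/K$, so choosing a complement $T^*=K^*{}^\vee\oplus(T^*/K)$—more precisely splitting $T=K\oplus T'$ at the level of the original spaces—decomposes the middle term $T\otimes F_1=(K\otimes F_1)\oplus(T'\otimes F_1)$ in a way compatible with the map from $S\otimes F_0$, since that map lands entirely in the $T'\otimes F_1$ summand (as $\varphi$ kills $K$). This yields $E\cong E_K\oplus(K^*\otimes F_1)$, where $E_K$ is the cokernel of the restricted map $S\otimes F_0\to T'\otimes F_1$. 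Thus $\varphi$ non-injective forces such a splitting, and conversely a splitting of this shape forces a nonzero summand $K^*\otimes F_1$, whose presence contradicts $H^0(E^\vee\otimes F_1)=0$ via $(ii)$; so $(iii)$ (non-splitting) is equivalent to $(i)$.

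The moreover part is then a matter of bookkeeping. Since the map $S\otimes F_0\to T'\otimes F_1$ is precisely the one whose associated linear map is $T^*/K\to S^*\otimes H^0(F_0^\vee\otimes F_1)$ induced by $\varphi$ (as $\varphi$ is already defined on $T^*/K$ when $K\subseteq\ker\varphi$), the cokernel $E_K$ is by definition the $(F_0,F_1)$-Steiner bundle corresponding to that induced map. Taking $K=\ker\varphi$ makes the induced map injective with image $\image\varphi$, so $E_{\ker\varphi}$ corresponds to the inclusion $\image\varphi\hookrightarrow S^*\otimes H^0(F_0^\vee\otimes F_1)$, and by the already-proven equivalence $(i)\Leftrightarrow(ii)$ applied to this injective map, $H^0(E_{\ker\varphi}^\vee\otimes F_1)=0$.

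The main obstacle I anticipate is the cohomological computation in $(i)\Leftrightarrow(ii)$: one must justify carefully that all the other cohomology terms appearing in the long exact sequence for $\HHom(E,F_1)$ vanish or cancel, which is exactly where strong exceptionality and the global generation hypothesis enter. In particular, identifying $\Hom(T\otimes F_1,F_1)\cong T^*$ and $\Hom(S\otimes F_0,F_1)\cong S^*\otimes H^0(F_0^\vee\otimes F_1)$, and checking that the relevant $\Ext^1$ term vanishes so that the sequence truncates correctly, is the step that genuinely uses the hypotheses on $(F_0,F_1)$ rather than just formal diagram-chasing.
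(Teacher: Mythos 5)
Your proof is correct, and its key step takes a genuinely different route from the paper's. The equivalence (i)$\Leftrightarrow$(ii) is essentially the paper's own argument: apply $\Hom(-,F_1)$ to the defining sequence (the paper dualizes, twists by $F_1$ and takes cohomology), use $\Hom(F_1,F_1)\cong k$ and $\Hom(F_0,F_1)\cong H^0(F_0^\vee\otimes F_1)$ to identify the first two terms with $T^*$ and $S^*\otimes H^0(F_0^\vee\otimes F_1)$, and read off $H^0(E^\vee\otimes F_1)=\ker\varphi$; note that the obstacle you anticipate about an $\Ext^1$ vanishing is not real, since left-exactness of $\Hom(-,F_1)$ already computes the kernel, so only exceptionality of $F_1$ enters. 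Likewise your (ii)$\Rightarrow$(iii) (a summand $K^*\otimes F_1$ gives a nonzero element of $\Hom(E,F_1)$) is the paper's argument. The genuine divergence is in proving that a non-injective $\varphi$ forces the splitting. You observe that for $K\subseteq\ker\varphi$ the presentation map $S\otimes F_0\to T\otimes F_1$ lands in the subbundle $K^\perp\otimes F_1\cong (T^*/K)^*\otimes F_1$, because the component of that map along any functional $e\in T^*$ is exactly $\varphi(e)$; hence, fixing a complement $T=K^\perp\oplus C$ with $C\cong K^*$, the cokernel splits on the nose as $E\cong E_K\oplus(K^*\otimes F_1)$, where $E_K=\mathrm{coker}\left(S\otimes F_0\to K^\perp\otimes F_1\right)$ is the Steiner bundle of the induced map $T^*/K\to S^*\otimes H^0(F_0^\vee\otimes F_1)$ (fibrewise injectivity being inherited from the original map, by Proposition \ref{linalg}). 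The paper instead fixes $0\neq\tilde{t}\in\ker\varphi$, constructs $E_{\tilde{t}}$ from the quotient map, derives the exact sequence $0\to E_{\tilde{t}}\to E\to F_1\to 0$ via the snake lemma, and then splits this extension by applying $\Hom(F_1,-)$ and checking $\Ext^1(F_1,E_{\tilde{t}})=0$. Your route avoids the snake lemma and the $\Ext$ computation entirely, and it makes the ``moreover'' statements immediate: for $K=\ker\varphi$ the induced map is the inclusion $\image\varphi\hookrightarrow S^*\otimes H^0(F_0^\vee\otimes F_1)$, and $H^0(E_{\ker\varphi}^\vee\otimes F_1)=0$ follows from the already-proved (i)$\Leftrightarrow$(ii). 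The only blemish is notational: you write ``splitting $T=K\oplus T'$'', but $K$ lives in $T^*$, not in $T$; what you mean, and actually use, is $T=K^\perp\oplus C$ with $C\cong K^*$, i.e.\ a splitting of the surjection $T^*\twoheadrightarrow T^*/K$. This does not affect the validity of the argument.
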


\begin{proof}
To see that $(i)$ is equivalent to $(ii)$, it is enough to observe that, after dualizing and twisting by $F_1$ the exact sequence defining $E$, we get a short exact sequence
$$0\to E^\vee \otimes F_1\to (F_1^\vee)^t\otimes F_1\to (F_0^\vee)^s\otimes F_1\to 0.$$
So, taking cohomology and using the fact that $(F_0,F_1)$ is an exceptional pair, we see that $\varphi$ is injective if and only if $H^0(E^{\vee}\otimes F_1)=0$.

If $E\cong E_K\oplus (K\otimes F_1)$ then there is a non-trivial morphism $E\to F_1$, i.e. $H^0(E^{\vee}\otimes F_1)\neq 0$. This proves that $(i)$ implies $(iii)$.

Reciprocally, suppose $\varphi$ is not injective and let $0\neq \tilde{t}\in \ker\varphi\subset T^*$. There is a commutative triangle of linear maps
\[
\xymatrix{
T^* \ar[d] \ar[r]^(.3){\varphi} &  S^* \otimes H^0(F_0^\vee\otimes F_1)  \\
T^*/\tilde{t} \ar[ur]_{\varphi'}& }
\]
which, by Proposition \ref{linalg}, induces a commutative triangle
\[
\xymatrix{
T^*\otimes (F_1)_x^* \ar[d] \ar@{>>}[r]^(.5){\widetilde{\varphi}_x} &  S^*\otimes {(F_0^\vee)}_x  \\
T^*/\tilde{t} \otimes (F_1)_x^* \ar@{>>}[ur]_{\widetilde{\varphi'}_x}& }
\]
In particular, we see that $\varphi'$ is a linear map such that
$\varphi'_x$
induces a surjective linear map
$\widetilde{\varphi'}_x:T^*\otimes (F_1)_x^*\to S^*\otimes {(F_0^\vee)}_x$. Therefore, $\varphi'$ defines an $(F_0,F_1)$-Steiner bundle $E_{\tilde{t}}$ and we have a commutative diagram
\[
\xymatrix{ &  & 0 \ar[d] & & \\
0\ar[r] & S\otimes F_0 \ar[r] \ar@{=}[d] & \left(T^*/\tilde{t} \right)^*\otimes F_1 \ar[r] \ar[d] & E_{\tilde{t}} \ar[r] \ar[d] & 0 \\
0\ar[r] & S\otimes F_0 \ar[r]  & T\otimes F_1 \ar[r] \ar[d] & E\ar[r] & 0 \\
&  & \langle \tilde{t}\rangle^*  \otimes F_1\cong F_1 \ar[d] & & \\
&  & 0  &   &
}
\]
From snake's lemma we immediately deduce that the morphism $E_{\tilde{t}}\to E$ is injective and that its cokernel is isomorphic to $F_1$. Hence, the diagram above can be completed as follows:
\[
\xymatrix{ &  & 0 \ar[d] & 0 \ar[d] & \\
0\ar[r] & S\otimes F_0 \ar[r] \ar@{=}[d] & \left(T^*/\tilde{t} \right)^*\otimes F_1 \ar[r] \ar[d] & E_{\tilde{t}} \ar[r] \ar[d] & 0 \\
0\ar[r] & S\otimes F_0 \ar[r]  & T\otimes F_1 \ar[r] \ar[d] & E\ar[r] \ar[d] & 0 \\
&  & \langle \tilde{t}\rangle^*  \otimes F_1\cong F_1 \ar[d] \ar@{=}[r] & F_1 \ar[d] & \\
&  & 0  & 0  &
}
\]
Now, applying the functor $\mathrm{Hom}(F_1,-)$ to the exact sequence that defines $E$ (the middle row in the diagram), it follows that $\mathrm{Hom}(F_1,E)\cong k^t$. Applying the same functor to the sequence in the right column, we get that $\mathrm{Ext}^1(F_1,E_{\tilde{t}})=0$ and thus, $E$ splits as $E_K\oplus (K^*\otimes F_1)$, where, by construction, $K=\langle \tilde{t}\rangle^*\subset \ker\varphi\subset T^*$.

The last statements are an immediate consequence of Proposition \ref{linalg} and the equivalences just proved.
\end{proof}

The previous lemma motivates the following definition.

\begin{defi}
An $(F_0,F_1)$-Steiner $E$ is called \emph{reduced} if one of the properties in Lemma \ref{deflemma} hold. The $(F_0,F_1)$-Steiner bundle $E_0:=E_{\ker\varphi}$ is called the \emph{reduced summand} of $E$.
\end{defi}

\begin{examples}\label{exSteinerRed}\text{ }
\begin{itemize}
\item[(a)] Any $(\mathcal{O}_\Pn(a),\mathcal{O}_\Pn(b))$-Steiner bundle on $\Pn$ of rank $n$ is reduced. If this was not the case, by the previous lemma there would exist an $(\mathcal{O}_\Pn(a),\mathcal{O}_\Pn(b))$-Steiner bundle $E_{\ker\varphi}$ of rank less than $n$, contradicting Proposition \ref{thm-rank}.
\item[(b)] Let $s\leq k+1$. Then there is an exact sequence on the Grassmannian variety,
    \[
    0\to \UU^s\to \mathcal{O}_{\GG}^t\to Q^s\oplus\OO^\alpha_\GG\to 0,
    \]
    where $t=s(n+1)+\alpha$, with $\alpha\geq 0$, and $Q$ is the quotient bundle on $\GG$. If $\alpha>0$ then $Q^s\oplus\OO^\alpha_\GG$ is a $(\UU, \mathcal{O}_{\GG})$-Steiner bundle on $\GG$ that is not reduced (see \cite{AM}).
\end{itemize}
\end{examples}
\bigskip

From now on, we will restrict our study to $(F_0,\OO_X)$-Steiner bundles on a smooth projective variety $X$. In particular, $F_0^\vee$ will be a globally generated vector bundle on $X$. Denote $f_0=\rk(F_0)$.

Consider the exact sequence of vector bundles on $X$ induced by the fact that $F_0^\vee$ is generated by its global sections
\begin{equation}\label{vbK}
0\longrightarrow F_0 \longrightarrow H^0(F_0^\lor)^* \otimes \OO_X \longrightarrow Q \longrightarrow 0.
\end{equation}
The following proposition determines a range on the rank of any $(F_0,\OO_X)$-Steiner bundle on $X$ which is a necessary condition for its existence.

\begin{prop}\label{thm-rank}
 Let $E$ be an $(F_0,\OO_X)$-Steiner bundle on a projective variety $X$ such that each Chern class of $S\otimes Q$ is non-zero, where $Q$ is as defined in (\ref{vbK}). Then
$$
\rk(E) \geq \min (\dim X, s \cdot\rk(Q)).
$$
\end{prop}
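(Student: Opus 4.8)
The plan is to reduce the statement entirely to a comparison of Chern classes, exploiting the fact that both exact sequences at our disposal have a \emph{trivial} bundle as their middle term. First I would compute the total Chern class of $E$ from the defining sequence $0\to S\otimes F_0\to T\otimes \OO_X\to E\to 0$. Since $T\otimes \OO_X$ is trivial, the Whitney product formula gives
$$c(E)=c(S\otimes F_0)^{-1}=c(F_0)^{-s},$$
where I have used $c(S\otimes F_0)=c(F_0^{\oplus s})=c(F_0)^s$.

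Next I would read off the total Chern class of $F_0$ from the sequence (\ref{vbK}). Again the middle term $H^0(F_0^\vee)^*\otimes \OO_X$ is trivial, so $c(F_0)\,c(Q)=1$, that is, $c(F_0)^{-1}=c(Q)$. Substituting into the previous computation yields the key identity
$$c(E)=c(Q)^s=c(Q^{\oplus s})=c(S\otimes Q),$$
so that $c_i(E)=c_i(S\otimes Q)$ for every $i$. In other words, $E$ and $S\otimes Q$ have \emph{exactly} the same Chern classes, even though their ranks may differ substantially; note that the rank identities ($\rk(E)=t-s\cdot\rk(F_0)$, etc.) play no direct role here, only the multiplicativity of $c(-)$.

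Finally I would convert this identity into the rank bound by a vanishing argument. Set $r=\rk(E)$ and $m=\min(\dim X, s\cdot\rk(Q))$, and suppose for contradiction that $r<m$. Consider the index $i=r+1$. Then $i\le m\le \dim X$, so degree $i$ is an admissible (possibly nonzero) degree in the Chow ring of $X$; and $i\le m\le s\cdot\rk(Q)=\rk(S\otimes Q)$, so $c_i(S\otimes Q)$ sits in the range of potentially nonzero Chern classes of $S\otimes Q$. By hypothesis $c_i(S\otimes Q)\neq 0$. On the other hand, since $E$ is a vector bundle of rank $r$ and $i=r+1>r$, we have $c_i(E)=0$. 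This contradicts the identity $c_i(E)=c_i(S\otimes Q)$, and therefore $r\ge m$, which is precisely the assertion.

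I expect the only genuinely delicate point to be the correct reading of the hypothesis ``each Chern class of $S\otimes Q$ is non-zero'': it must be interpreted as the non-vanishing of $c_i(S\otimes Q)$ for all $i$ in the range $1\le i\le \min(\dim X, s\cdot\rk(Q))$, since outside this range these classes vanish for trivial reasons (the degree either exceeds $\rk(S\otimes Q)$ or exceeds $\dim X$). Once this is pinned down, the argument is purely formal and uses no geometry beyond the Whitney product formula; this simplicity is itself a mild warning to double-check that the non-vanishing hypothesis is exactly what is needed and is not being used in a stronger form than stated.
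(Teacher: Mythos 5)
Your proof is correct, and it reaches the bound by a genuinely more elementary route than the paper. The paper's own proof is geometric: it regards $E$ as the cokernel of $\alpha\colon S\otimes F_0\to T\otimes\OO_X$, introduces the degeneracy locus $D_\alpha=\{x\in X \mid \rk(\alpha_x)\leq sf_0-1\}$, notes that the Steiner condition is exactly $D_\alpha=\emptyset$, and then invokes Porteous' formula: the expected codimension of $D_\alpha$ is $r+1$ and its fundamental class is $c_{r+1}(S\otimes Q)$, so under the non-vanishing hypothesis emptiness forces either $r+1>\dim X$ or $r+1>\rk(S\otimes Q)$. You bypass the degeneracy-locus machinery entirely: Whitney's formula applied to the defining sequence and to (\ref{vbK}) gives $c(E)=c(F_0)^{-s}=c(Q)^s=c(S\otimes Q)$, and the elementary fact that Chern classes of a vector bundle vanish above its rank gives $c_{r+1}(E)=0$, contradicting $c_{r+1}(S\otimes Q)\neq 0$ whenever $r+1\leq\min(\dim X, s\cdot\rk(Q))$. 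The substance is shared --- the Porteous determinant for the virtual class $[T\otimes\OO_X]-[S\otimes F_0]=[E]$ is precisely your $c_{r+1}(E)$ --- but your packaging buys three things: no case analysis on $r+1$ versus $\dim X$; no appeal to Porteous, whose ``if and only if'' as stated in the paper is slightly loose (only the direction ``empty locus implies vanishing class'' is needed, and that is the direction which holds without genericity caveats); and an explicit statement of how the hypothesis must be read, namely $c_i(S\otimes Q)\neq 0$ for $1\leq i\leq\min(\dim X,\rk(S\otimes Q))$, which both arguments silently require. What the paper's formulation buys in exchange is the geometric interpretation --- $c_{r+1}(S\otimes Q)$ is the class of the locus where the defining map drops rank --- and that degeneracy-locus viewpoint is reused later (e.g.\ Porteous reappears in the proof of Theorem \ref{thm-uppbound} to produce independent sections), so the paper's choice is one of coherence with the rest of the text rather than of necessity here.
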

\begin{proof}
Let $\alpha$ be the vector bundle morphism defined by
$$
S \otimes F_0 \stackrel{\alpha}{\longrightarrow} T \otimes \OO_X \dashrightarrow E.
$$
Denote by $r = \rk (E) = t - s f_0$ and consider the \emph{degeneracy locus} of $\alpha$,
$$
D_\alpha=\left\{x \in X \:|\: \rk (\alpha_x) \leq s f_0 -1 \right\},
$$
i.e. the set of
points $x$ of $X$ such that the rank of the morphism $\alpha_x$ is not maximal. We know that $E$ is Steiner if and only if $\alpha$ is injective in each fiber, that is, if and only if $D_\alpha=\emptyset$.

Using Porteous' formula we get that the expected codimension of the degeneracy locus is equal to
$$
t - s f_0 +1 = \rk (E) +1 = r+1.
$$

Hence, it is clear that when $r+1 > \dim X$, or equivalently, when $\rk (E) \geq \dim X$, we can ensure that $\alpha$ is injective.

Otherwise, when $r+1\leq \dim X$, the degeneracy locus will be empty if and only if its fundamental class, given by the Chern class $c_{r+1}(S\otimes Q)$, is zero. From our hypothesis on the Chern classes of $S\otimes Q$, this holds if and only if $r+1 > \rk(S\otimes Q)$, i.e $\rk(E) \geq s\cdot \rk(Q)$.
\end{proof}

The previous result tells us that the family of Steiner bundles is naturally divided in two subfamilies, depending on whether $\dim X < s\cdot \rk(Q)$ or $\dim X \geq s\cdot \rk(Q)$. The latter is completely classified in the next proposition.

\begin{prop}\label{prop-trivial}
Let $E$ be an $(F_0,\OO_X)$-Steiner bundle on a smooth projective variety $X$ such that each Chern class of $S\otimes Q$ is non-zero, where $Q$ is as defined in (\ref{vbK}). If $\dim X \geq s \cdot\rk(Q)$, then $E \simeq (S \otimes Q) \oplus \OO_X^p$, for some $p \geq 0$.
\end{prop}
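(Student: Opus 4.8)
The plan is to combine the linear-algebra dictionary of Proposition \ref{linalg} with the rank bound of Proposition \ref{thm-rank}, reducing everything to a dimension count for the map $\varphi\colon T^*\to S^*\otimes H^0(F_0^\vee)$ associated to $E$ (note that $F_1=\OO_X$ gives $H^0(F_0^\vee\otimes F_1)=H^0(F_0^\vee)$). First I would \emph{not} assume $E$ reduced: by Lemma \ref{deflemma} there is a splitting $E\cong E_0\oplus\bigl((\ker\varphi)^*\otimes\OO_X\bigr)$, where the reduced summand $E_0$ is the $(F_0,\OO_X)$-Steiner bundle attached to the injection $\image\varphi\hookrightarrow S^*\otimes H^0(F_0^\vee)$. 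Since $E_0$ has the same $S$ and the same $F_0$ as $E$, the bundle $Q$ and hence the Chern-class hypothesis are unchanged, so $E_0$ again satisfies all the hypotheses of the proposition. It therefore suffices to prove $E_0\cong S\otimes Q$; the remaining factor contributes the summand $\OO_X^p$ with $p=\dim\ker\varphi$.

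Next I would run the numerology on $E_0$. Writing $t_0=\dim\image\varphi$ for the number of copies of $\OO_X$ in the presentation of $E_0$, and recalling from (\ref{vbK}) that $\rk(Q)=h^0(F_0^\vee)-f_0$, the rank bound of Proposition \ref{thm-rank} applied to $E_0$ (legitimate because $\dim X\geq s\cdot\rk(Q)$ forces the minimum there to be $s\cdot\rk(Q)$) gives
\[
t_0-s f_0=\rk(E_0)\geq s\cdot\rk(Q)=s\,h^0(F_0^\vee)-s f_0,
\]
and hence $t_0\geq s\,h^0(F_0^\vee)$. On the other hand, by construction the map associated to $E_0$ is the injection $\image\varphi\hookrightarrow S^*\otimes H^0(F_0^\vee)$, so $t_0=\dim\image\varphi\leq\dim\bigl(S^*\otimes H^0(F_0^\vee)\bigr)=s\,h^0(F_0^\vee)$. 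Combining the two inequalities yields $t_0=s\,h^0(F_0^\vee)$ and $\image\varphi=S^*\otimes H^0(F_0^\vee)$; that is, the linear map associated to $E_0$ is an isomorphism.

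Finally I would identify the Steiner bundle attached to an isomorphism with $S\otimes Q$. Tensoring the defining sequence (\ref{vbK}) of $Q$ by $S$ exhibits $S\otimes Q$ as the $(F_0,\OO_X)$-Steiner bundle with presentation $0\to S\otimes F_0\to S\otimes H^0(F_0^\vee)^*\otimes\OO_X\to S\otimes Q\to 0$, and unwinding the dictionary of Proposition \ref{linalg} shows that its associated map is the identity of $S^*\otimes H^0(F_0^\vee)$. Since the map of $E_0$ is an isomorphism, composing with its inverse matches the two presentations, and the cokernels $E_0$ and $S\otimes Q$ are isomorphic. The only genuinely delicate point, and the place I would be most careful, is precisely this last identification: I must verify that the correspondence of Proposition \ref{linalg} is natural enough that an isomorphism of the linear data $\varphi$ (absorbed into the automorphisms of $T$ and $S$ that it induces) really lifts to an isomorphism of the resulting vector bundles, rather than merely equating their numerical invariants.
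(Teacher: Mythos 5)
Your proof is correct and follows essentially the same route as the paper: split off the trivial factor via Lemma \ref{deflemma}, use Proposition \ref{thm-rank} together with the injectivity of the map associated to the reduced summand to force $t_0 = s\,h^0(F_0^\vee)$, and conclude that this map is an isomorphism, identifying the reduced summand with $S\otimes Q$. The ``delicate point'' you flag at the end is resolved in the paper by an explicit commutative diagram comparing the dual of the defining sequence of $E$ with the $S^*$-twist of the dual of (\ref{vbK}): the isomorphism $\varphi\otimes id_X$ between the middle terms induces an isomorphism $E^\vee\to S^*\otimes Q^\vee$ on kernels, which is exactly the lift of linear data to bundles that you wanted to verify.
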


\begin{proof}
Suppose $E$ is reduced and consider the following diagram:
$$
\xymatrix{
& 0 \ar[d] & 0 \ar[d] \\
0 \ar[r] & E^\lor \ar[d]^{\xi} \ar[r] & T^* \otimes \OO_X \ar[d]^{\varphi \otimes id_X} \ar[r] & S^* \otimes F_0^\lor \ar[d]^{\simeq} \ar[r] & 0 \\
0 \ar[r] & S^* \otimes Q^\vee \ar[d] \ar[r] & S^* \otimes H^0(F_0^\lor) \otimes \OO_X \ar[d] \ar[r] & S^* \otimes F_0^\lor \ar[r] & 0 \\
& \frac{S^* \otimes H^0(F_0^\lor)}{T^*} \otimes \OO_X \ar[d] \ar[r]^{\simeq} & \frac{S^* \otimes H^0(F_0^\lor)}{T^*} \otimes \OO_X \ar[d] \\
& 0 & 0
}
$$

Since $\varphi$ is injective, we have $t\leq sh^0(F_0^\vee)$. From the hypothesis on the dimension of $X$, it follows from Proposition \ref{thm-rank} that $\rk(E)\geq s\cdot \rk(Q)$ and hence $t\geq sh^0(F_0^\vee)$. Therefore, $\varphi \otimes id_X$ is an isomorphism and so is $\xi$. Thus $E\simeq S\otimes Q$.

When $E$ is not reduced there is a splitting $E=E_{K}\oplus (K^*\otimes \OO_X)$ (recall Lemma \ref{deflemma}), for some $K\subset \ker\varphi\subset T^*$, where $E_K$ is a reduced $(F_0,\OO_X)$-Steiner bundle. So by the previous argument, $E_K$ is isomorphic to $S \otimes Q$ and $E$ is isomorphic to $(S \otimes Q) \oplus \OO_X^p$.
\end{proof}

\begin{remark}\label{gen-fin}
Consider the natural morphism $\sigma : X \longrightarrow \GG(f_0-1, \PP(H^0(F_0^\lor)))$. If $\sigma$ is generically finite then each Chern class of $S\otimes Q$ is non-zero.
Conversely, if each Chern class of $S\otimes Q$ is non-zero and $\dim X\leq s\cdot\rk(Q)$ then $\sigma$ is generically finite.
Both statements can be proved using the projection formula.
\end{remark}

Having classified all Steiner bundles in the case when $\dim X\geq s\cdot\rk(Q)$, from now on we will always suppose that $\dim X< s\cdot\rk(Q)$. In the sequel we will refer to the bundles $E \simeq (S \otimes Q) \oplus \OO_X^p$ as the \emph{trivial Steiner bundles} on $X$.

\section{Generalized Schwarzenberger on smooth projective varieties}

Our goal in this section is to generalize Schwarzenberger bundles on the projective space and on the Grassmann variety $\GG(k,n)$, as defined in \cite{A} and \cite{AM}, respectively, to any smooth projective variety $X$.

We first recall Schwarzenberger bundles on $\GG(k,n)$, following the construction in \cite{AM}.
Let us consider two globally generated vector bundles $L,M$ over a projective variety $Y$, with $h^0(M) = n+1$ and with the identification $\PP^n = \PP(H^0(M)^*)$. Consider the composition
$$
H^0(L) \otimes \UU \longrightarrow H^0(L) \otimes H^0(M) \otimes \OO_\GG \longrightarrow H^0(L\otimes M) \otimes \OO_\GG.
$$
We want this composition to be injective, that is we want it be injective in each fiber. This is equivalent to fixing $k+1$ independent global sections $\{\sigma_1,\ldots,\sigma_{k+1}\}$ in $H^0(M)$ in correspondence to the point $\Gamma = [<\sigma_1,\ldots,\sigma_{k+1}>] \in \GG(k,n)$ and requiring the injectivity of the following composition
$$
H^0(L) \otimes <\sigma_1,\ldots,\sigma_{k+1}> \longrightarrow H^0(L) \otimes H^0(M) \longrightarrow H^0(L\otimes M),
$$
given by multiplication with the global section subspace $<\sigma_1,\ldots,\sigma_{k+1}>$.

If the injectivity holds for each point of the Grassmannian then a \emph{Schwarzenberger bundle $F= F(Y,L,M)$} is defined as an $(\UU,\OO)$-Steiner bundle defined by the resolution
$$
0 \longrightarrow H^0(L) \otimes \UU \longrightarrow H^0(L\otimes M) \otimes \OO_\GG \longrightarrow F \longrightarrow 0.
$$

\bigskip

Motivated by the previous case, we will now describe the the construction of a Schwarzenberger bundle on a smooth projective variety $X$.
Let $Z$ be a projective variety and consider a non-degenerate linearly normal morphism $\psi: Z\to G(f_0,H^0(F_0^\vee))$. Consider the composition
$$0\to H^0(L)\otimes F_0\to H^0(L)\otimes H^0(F_0^\vee)^*\otimes \mathcal{O}_X\to H^0(L)\otimes H^0(\psi^*\UU^\vee)\otimes \mathcal{O}_X\to H^0(L\otimes\psi^*\UU^\vee)\otimes \mathcal{O}_X,$$
where $\UU$ denotes the universal subbundle on $G:=G(f_0,H^0(F_0^\vee))$, and $L$ is a globally generated locally free sheaf on $Z$.
The first map is given by the monomorphism $F_0\hookrightarrow H^0(F_0^\vee)^*\otimes \mathcal{O}_X$ (recall that $F_0^\vee$ is generated by global sections and hence there is an epimorphism $H^0(F_0^\vee)\otimes \mathcal{O}_X\twoheadrightarrow F_0^\vee$). The second map is just given by the fact that $H^0(F_0^\vee)^*\cong H^0(\UU^\vee)\cong H^0(\psi^*\UU^\vee)$, and the last map is the one induced by the natural morphism $H^0(L)\otimes H^0(\psi^*\UU^\vee)\to H^0(L\otimes\psi^*\UU^\vee)$.

Let us show that this composition is injective, i.e. that
$$\eta: H^0(L)\otimes F_0\to H^0(L\otimes\psi^*\UU^\vee)\otimes \mathcal{O}_X$$
is injective on each fiber. Given any $x\in X$, the composition $(F_0)_x\to H^0(\psi^*\UU^\vee)$ of the first two maps (on the second factor) is obviously injective. Observing that to give a morphism $(F_0)_x\hookrightarrow H^0(\psi^*\UU^\vee)$ is the same as to give a map $(F_0)_x\otimes\mathcal{O}_Z\hookrightarrow \psi^*\UU^\vee$, we deduce that $L\otimes (F_0)_x\hookrightarrow L\otimes\psi^*\UU^\vee$ is still injective. Finally, applying cohomology, we conclude that $\eta_x:H^0(L)\otimes (F_0)_x\hookrightarrow H^0(L\otimes\psi^*\UU^\vee)$ is injective.

Therefore, we have just constructed an $(F_0,\OO_X)$-Steiner bundle on $X$ defined by
$$0\to H^0(L)\otimes F_0\to H^0(L\otimes\psi^*\UU^\vee)\otimes \mathcal{O}_X \to E\to 0.$$
Furthermore, observe that, under the identification $H^0(F_0^\vee)^*\cong H^0(\psi^*\UU^\vee)$, the map $\varphi:H^0(L\otimes\psi^*\UU^\vee)^*\to H^0(L)^*\otimes H^0(F_0^\vee)$ is nothing but the dual of the multiplication map $H^0(L)\otimes H^0(\psi^*\UU^\vee)\to H^0(L\otimes\psi^*\UU^\vee)$.

This construction allows us to generalize the notion of a Schwarzenberger bundle to any smooth projective variety.

\begin{defi}\label{defSch}
Let $X$ be a smooth projective variety. Let $Z$ be a projective variety provided with a non-degenerate linearly normal morphism $\psi: Z\to G(f_0,H^0(F_0^\vee))$ and $L$ a globally generated locally free sheaf on $Z$. A \emph{$(Z,\psi,L)$-Schwarzenberger bundle} on $X$ is the $(F_0,\mathcal{O}_X)$-Steiner bundle $E$ defined by the short exact sequence
$$0\to H^0(L)\otimes F_0\to H^0(L\otimes\psi^*\UU^\vee)\otimes \mathcal{O}_X \to E\to 0,$$
constructed above.
\end{defi}

\begin{remark} Observe that a Schwarzenberger bundle $F= F(Y,L,M)$ on $\GG(k,n)$ is a $(Z,\psi,L)$-Schwarzenberger bundle in the sense of Definition \ref{defSch}, where $Z=Y$ and $\psi:Y\to \mathbb{P}^n=\mathbb{P}(H^0(M)^*)$.
\end{remark}

\section{Jumping pairs of Steiner bundles}
We would like to know when an $(F_0,\OO_X)$-Steiner bundle on a smooth projective variety $X$ is a $(Z,\psi,L)$-Schwarzenberger bundle on $X$. In order to answer to this question we will look for some property that distinguishes a Schwarzenberger bundle. More precisely, we will show that for any point $z\in Z$, a Schwarzenberger bundle on $X$ associates a special subspace of $H^0(F_0^\vee)$.

If $E$ is a $(\psi,Z,L)$-Schwarzenberger bundle on $X$, we have
$$S=H^0(L), \quad T=H^0(L\otimes\psi^*U^\vee),$$
and we already observed that the map $\varphi:H^0(L\otimes\psi^*U^\vee)^*\to H^0(L)^*\otimes H^0(F_0^\vee)$ is the dual of the multiplication map. Suppose $\rk(L)=a$.

For each $z\in Z$, the surjective morphisms $H^0(\psi^*U^\vee)\twoheadrightarrow (\psi^*U^\vee)_z$ and $H^0(L)\twoheadrightarrow L_z$ induce, respectively, an $f_0$-dimensional subspace $(\psi^*U)_z\subset H^0(\psi^*U^\vee)^*\cong H^0(F_0^\vee)$ and a subspace $L_z^*\subset H^0(L)^*$ of dimension $a$. Since $\varphi$ maps $H^0(L_z\otimes(\psi^*U^\vee)_z)^*$ isomorphically into $H^0(L_z)^*\otimes H^0((\psi^*U^\vee)_z)^*\cong L_z^*\otimes (\psi^*U)_z$, each point $z\in Z$ yields a pair of subspaces $(L_z^*,(\psi^*U)_z)$ such that $L_z\otimes (\psi^*U)_z\in \image\varphi$.

This property of the Schwarzenberger bundles motivates the following definition of jumping subspaces and jumping pairs of an $(F_0,\mathcal{O}_X)$-Steiner bundle on $X$.

\begin{defi}\label{defjump}
Let $E$ be an $(F_0,\mathcal{O}_X)$-Steiner bundle on $X$. An \emph{$(a,b)$-jumping subspace of $E$} is a $b$-dimensional subspace $B\subset H^0(F_0^\vee)$ for which there exists an $a$-dimensional subspace $A\subset S^*$ such that $A\otimes B$ is in the image $T^*_0$ of $\varphi: T^*\to S^*\otimes H^0(F_0^\vee)$. Such a pair $(A,B)$ is called an \emph{$(a,b)$-jumping pair of $E$}.

We will write $J_{a,b}(E)$ and $\tilde{J}_{a,b}(E)$ to denote, respectively, the set of $(a,b)$-jumping subspaces and the set of $(a,b)$-jumping pairs of $E$. We will also write $\Sigma_{a,b}(E)$ to denote the set of $a$-dimensional subspaces $A\subset S^*$ for which there exists a $b$-dimensional subspace $B\subset H^0(F_0^\vee)$ such that $(A,B)$ is a $(a,b)$-jumping pair of $E$.
\end{defi}

It turns out that the set of jumping pairs has a geometric interpretation similar to the one obtained in Lemma 2.4. in \cite{A} that endows $\tilde{J}_{a,b}(E)$ with a natural structure of projective variety. Consider the natural generalized Segre embedding
$$\nu:G(a,S^*)\times G(b,H^0(F_0^\vee))\to G(ab,S^*\otimes H^0(F_0^\vee))$$
given by the tensor product of subspaces.
Then, we can state the following result (the proof is essentially the same as in Lemma 2.4 in \cite{A}, so we omit it).
\begin{lemma}\label{lem-class}
Let $E$ be an $(F_0,\mathcal{O}_X)$-Steiner bundle on $X$. Then:
\begin{itemize}
\item[(i)] the set $\tilde{J}_{a,b}(E)$ of jumping pairs of $E$ is the intersection of the image of $\nu$ with the subset $G(ab,T^*_0)\subset G(ab,S^*\otimes H^0(F_0^\vee))$, i.e.
$$\tilde{J}_{a,b}(E)=\image\nu\cap G(ab,T^*_0).$$
\item[(ii)] If $\pi_1$ and $\pi_2$ are the respective projections from $\tilde{J}_{a,b}(E)$ to $G(a,S^*)$ and $G(b,H^0(F_0^\vee))$, then $\Sigma_{a,b}(E)=\pi_1(\tilde{J}_{a,b}(E))$ and $J_{a,b}(E)=\pi_2(\tilde{J}_{a,b}(E))$.
\item[(iii)] Let $\mathcal{A}$, $\mathcal{B}$ and $\mathcal{Q}$ be the universal quotient bundles of respective ranks $a$, $b$ and $ab$ of $G(a,S^*)$, $G(b,H^0(F_0^\vee))$ and $G(ab,T_0^*)$. Assume that the natural maps
\begin{gather*}
\alpha: H^0(G(a,S^*),\mathcal{A})\to H^0(\tilde{J}_{a,b}(E),\pi^*_1\mathcal{A}) \\
\beta: H^0(G(b,H^0(F_0^\vee)),\mathcal{B})\to H^0(\tilde{J}_{a,b}(E),\pi^*_2\mathcal{B}) \\
\alpha: H^0(G(ab,T_0^*),\mathcal{Q})\to H^0(\tilde{J}_{a,b}(E),\mathcal{Q}_{|\tilde{J}_{a,b}(E)})
\end{gather*}
are isomorphisms. Then the reduced summand $E_0$ of $E$ is the $(\tilde{J}_{a,b}(E), |\pi_2^* \mathcal{B}|,\pi_1^*\mathcal{A})$-Schwarzenberger bundle.
\end{itemize}
\end{lemma}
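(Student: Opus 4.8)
The plan is to prove the three parts in order, since each builds on the previous. For part (i), I would unwind the definitions: a point of $\image\nu$ is exactly a subspace of $S^*\otimes H^0(F_0^\vee)$ of the form $A\otimes B$ with $\dim A = a$ and $\dim B = b$, by the very definition of the generalized Segre embedding $\nu$. Such a decomposable subspace being contained in $T_0^* = \image\varphi$ is, by Definition \ref{defjump}, precisely the condition that $(A,B)$ is an $(a,b)$-jumping pair. So membership in $\image\nu\cap G(ab,T_0^*)$ is equivalent to being a jumping pair; one must check that the containment $A\otimes B\subseteq T_0^*$ matches the incidence condition defining $G(ab,T_0^*)$ as a sub-Grassmannian of $G(ab,S^*\otimes H^0(F_0^\vee))$, which is immediate. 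This also gives $\tilde J_{a,b}(E)$ its structure as a projective variety, being an intersection of closed subvarieties.

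Part (ii) is essentially formal. The maps $\pi_1,\pi_2$ are the restrictions to $\tilde J_{a,b}(E)\subset G(a,S^*)\times G(b,H^0(F_0^\vee))$ of the two factor projections. By definition, $\Sigma_{a,b}(E)$ is the set of $A$ admitting some $B$ with $(A,B)$ a jumping pair, which is exactly the image of $\tilde J_{a,b}(E)$ under the first projection; symmetrically for $J_{a,b}(E)$ and $\pi_2$. I would state this as a direct consequence of the definitions and part (i).

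Part (iii) is the substantive claim and the main obstacle. The strategy is to reconstruct the defining sequence of a Schwarzenberger bundle from the tautological data on $\tilde J_{a,b}(E)$ and match it against the sequence defining $E_0$. Set $Z=\tilde J_{a,b}(E)$, with $\psi$ the morphism to $G(f_0,H^0(F_0^\vee))$ induced by $\pi_2$ (after checking $b=f_0$ and that $\pi_2^*\mathcal B$ realizes $\psi^*\UU^\vee$), and with $L=\pi_1^*\mathcal A$. The isomorphism hypotheses on $\alpha$ and $\beta$ ensure $H^0(Z,L)\cong S^*{}^*=S$, $H^0(Z,\pi_2^*\mathcal B)\cong H^0(F_0^\vee)^{**}$-type identifications matching $S$ and the relevant global-section spaces, and the third isomorphism (on $\mathcal Q$) identifies $H^0(Z,L\otimes\psi^*\UU^\vee)$ with $T_0^*{}^*=T_0$, i.e. with the reduced space $T/\ker\varphi$. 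I would then verify that under these identifications the multiplication map $H^0(L)\otimes H^0(\psi^*\UU^\vee)\to H^0(L\otimes\psi^*\UU^\vee)$ dualizes to the inclusion $\image\varphi\hookrightarrow S^*\otimes H^0(F_0^\vee)$, which is exactly the map defining $E_{\ker\varphi}=E_0$ by the last statement of Lemma \ref{deflemma}. Since a Schwarzenberger bundle is determined by the dual of its multiplication map (as noted after Definition \ref{defSch}), matching these two maps identifies the $(Z,|\pi_2^*\mathcal B|,\pi_1^*\mathcal A)$-Schwarzenberger bundle with $E_0$.

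The hard part will be the third step: verifying that the global sections of the tautological quotient bundles on $\tilde J_{a,b}(E)$ recover the original vector spaces $S^*$, $H^0(F_0^\vee)$ and $T_0^*$ compatibly, and that the induced multiplication map is genuinely the restriction of the Segre/tensor structure rather than merely abstractly isomorphic to it. This is where the three isomorphism hypotheses on $\alpha$ and $\beta$ do the real work, and one must track the identifications carefully enough that the multiplication map on $Z$ and the inclusion $\image\varphi\hookrightarrow S^*\otimes H^0(F_0^\vee)$ literally coincide, not just up to an unspecified isomorphism. I expect the compatibility of $\psi$ being non-degenerate and linearly normal (required by Definition \ref{defSch}) to follow from exactly these surjectivity/isomorphism assumptions, and I would confirm that as a final check.
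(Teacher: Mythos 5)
Your proposal is correct and follows essentially the same route as the paper, which in fact omits this proof entirely by deferring to Lemma 2.4 of \cite{A}: parts (i)--(ii) are the definition-unwinding you describe, and part (iii) is exactly the tautological-bundle argument, where the three isomorphism hypotheses canonically identify $H^0(\pi_1^*\mathcal{A})\cong S$, $H^0(\pi_2^*\mathcal{B})\cong H^0(F_0^\vee)^*$ (this is precisely the non-degeneracy and linear normality of $\psi$) and $H^0(\mathcal{Q}_{|\tilde{J}_{a,b}(E)})\cong T_0$, under which the multiplication map dualizes to the inclusion $T_0^*\hookrightarrow S^*\otimes H^0(F_0^\vee)$, i.e.\ to the map which by Lemma \ref{deflemma} and Proposition \ref{linalg} determines $E_0$. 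The subtleties you flag (injectivity of $\nu$, the identification $\psi^*\UU^\vee\cong\pi_2^*\mathcal{B}$ via $\mathcal{Q}_{|\tilde{J}_{a,b}(E)}\cong\pi_1^*\mathcal{A}\otimes\pi_2^*\mathcal{B}$, and implicitly $b=f_0$ so that Definition \ref{defSch} applies) are the right ones, and all check out routinely.
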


We can also deduce the following:
\begin{lemma}
Let $E$ be an $(F_0,\mathcal{O}_X)$-Steiner bundle on $X$. Then
$$\tilde{J}_{a,b}(E)=\tilde{J}_{a,b}(E_0),$$
where $E_0$ is the reduced summand of $E$. In particular, $J_{a,b}(E)=J_{a,b}(E_0)$.
\end{lemma}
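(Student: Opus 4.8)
The plan is to reduce everything to the definition of jumping pairs via the image $T^*_0$ of the linear map $\varphi$, and to show that this image is unchanged when passing from $E$ to its reduced summand $E_0$. The key observation is that, by Definition \ref{defjump}, the sets $\tilde{J}_{a,b}(E)$ and $\tilde{J}_{a,b}(E_0)$ depend on $E$ and $E_0$ only through their respective images $T^*_0=\image\varphi\subset S^*\otimes H^0(F_0^\vee)$ and $(T_0')^*=\image\varphi_0$. So the whole statement follows once I establish that $T^*_0=(T_0')^*$, i.e.\ that $\varphi$ and the map $\varphi_0$ associated to $E_0$ have the same image.

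First I would invoke Lemma \ref{deflemma}, which identifies $E_0=E_{\ker\varphi}$ as the $(F_0,\OO_X)$-Steiner bundle associated to the inclusion $\image\varphi\hookrightarrow S^*\otimes H^0(F_0^\vee)$. By the last statement of that lemma, the linear map $\varphi_0$ defining $E_0$ is precisely this inclusion, viewed as a map from $(\image\varphi)^*$ after identifying $T^*/\ker\varphi\cong\image\varphi$. In particular $\varphi_0$ is injective and its image is exactly $\image\varphi=T^*_0$. Thus both Steiner bundles determine the \emph{same} subspace $T^*_0\subset S^*\otimes H^0(F_0^\vee)$.

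Having identified the two images, I would then appeal directly to Lemma \ref{lem-class}(i), which gives the intrinsic description
\[
\tilde{J}_{a,b}(E)=\image\nu\cap G(ab,T^*_0).
\]
Since the right-hand side depends only on $T^*_0$ (the embedding $\nu$ of the generalized Segre variety being fixed), and since $T^*_0$ is common to $E$ and $E_0$, we conclude $\tilde{J}_{a,b}(E)=\image\nu\cap G(ab,T^*_0)=\tilde{J}_{a,b}(E_0)$. Finally, the assertion $J_{a,b}(E)=J_{a,b}(E_0)$ follows at once from Lemma \ref{lem-class}(ii), since the jumping subspaces are the image of the same variety $\tilde{J}_{a,b}$ under the same projection $\pi_2$.

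The only subtle point, and the step I expect to require the most care, is the bookkeeping of the canonical identifications: one must check that the inclusion $\image\varphi\hookrightarrow S^*\otimes H^0(F_0^\vee)$ furnished by Lemma \ref{deflemma} is genuinely equal to $\varphi$ restricted to a complement of $\ker\varphi$, and not merely equal up to an automorphism of $S^*\otimes H^0(F_0^\vee)$ that could move the Segre variety $\image\nu$. Because $E_0$ is defined through the \emph{same} ambient space $S^*\otimes H^0(F_0^\vee)$ (the spaces $S$ and $F_0$ are unchanged; only $T$ shrinks to $T/\ker\varphi$), this identification is the identity on the target, so $\image\nu$ is literally the same variety in both cases and no discrepancy arises. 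Once this is verified the proof is immediate.
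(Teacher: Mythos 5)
Your proof is correct and takes essentially the same route as the paper's: both use Lemma \ref{deflemma} to identify the linear map defining $E_0$ with $\varphi$ modulo its kernel, so that $\image\varphi_0=\image\varphi=T^*_0$, and then conclude because the jumping pairs depend only on this common image. The only cosmetic difference is that you route the conclusion through Lemma \ref{lem-class}(i) while the paper appeals directly to Definition \ref{defjump}, which amounts to the same thing.
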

\begin{proof}
In Lemma \ref{deflemma} we saw that $E=E_0\oplus (\ker\varphi)^*\otimes F_1$ and that $E_0$ corresponds to the linear map $\varphi':T^*/\ker\varphi\to S^*\otimes H^0(F_0^\vee)$. The statement now follows immediately, since $T_0^*=\image\varphi=\image\varphi'$.
\end{proof}
\bigskip

We will now restrict to the case of $(1,f_0)$-jumping subspaces of an $(F_0,\mathcal{O}_X)$-Steiner bundle $E$ on $X$.

Following the notation set in Definition \ref{defjump}, we will denote the set of $(1,f_0)$-jumping subspaces by $J(E)$, the set of $(1,f_0)$-jumping pairs by $\tilde{J}(E)$, and by $\Sigma$ the set of $1$-dimensional subspaces $A\subset S^*$ for which there exists an $f_0$-dimensional subspace $B\subset H^0(F_0^\vee)$ such that $(A,B)$ is a $(1,f_0)$-jumping pair of $E$.

By abuse of notation, we will denote the jumping locus considered both as vectorial and projectivized. Therefore, the projectivization of the Segre embedding
$$
\nu : G(1,S^*) \times G(f_0,H^0(F_0^\vee)) \longrightarrow G(f_0,S^*\otimes H^0(F_0^\vee))
$$
is
$$
\nu : \PP(S) \times \GG\left(f_0-1,\PP\left(H^0(F_0^\vee)^*\right)\right) \longrightarrow \GG\left(f_0-1,\PP\left(S \otimes H^0(F_0^\vee)^*\right)\right)
$$
and it follows from the definition of jumping pair that
\begin{equation}\label{eq-jump}
\tilde{J}(E) = \image \nu \cap \GG(f_0-1,\PP(T_0)).
\end{equation}
Moreover, we can immediately obtain a lower bound for the dimension of $\tilde{J}(E)$ by computing the expected dimension of the intersection (the case when we have a complete intersection):
$$\dim \tilde{J}(E)\geq f_0\left(t_0-f_0+h^0(F_0^\vee)(1-s)\right)+s-1.$$

\begin{remark}
Observe that the previous inequality implies that the dimension of the jumping variety can be negative, which means that $\tilde{J}(E)$ can be empty. In this case the corresponding Steiner bundle $E$ cannot be a Schwarzenberger bundle.
\end{remark}

\section{The tangent spaces of the jumping variety}

Our main purpose in this section is to obtain an upper bound for the jumping pairs subspace of an $(F_0,\OO_X)$-Steiner bundle $E$. Our result will allow us to classify all Steiner bundles such that $\tilde{J}(E)$ has maximal dimension in the next section.

Consider a jumping pair $\Lambda$ in $\tilde{J}(E)$. Then $\Lambda$ is an $f_0$-dimensional vector space that can be written as $s_0 \otimes \Gamma \subset S^* \otimes H^0(F_0^\lor) \simeq \Hom (H^0(F_0^\lor)^*,S^*)$.
In addition, recall that the tangent space of the jumping variety at $\Lambda=s_0 \otimes \Gamma$ is the set
\[
\begin{array}{rl}
T_\Lambda \tilde{J}(E)= &\left\{ \psi\in \Hom\left(\Lambda,\frac{T^*}{\Lambda}\right)|\:\forall\: \varphi \in \Lambda, (\psi(\varphi))(\ker \varphi) \subset <s_0> \right.\\
&\left.  \: \mbox{and} \: \exists \: A \supset <s_0> \:\mbox{with} \:A \subset S^*, \dim A =2 \:\mbox{ such that}\: \image \psi(\varphi) \subset A\right\},
\end{array}
\]
as proved in \cite{AM}.

Since $\Lambda$ is a morphism in $\Hom (H^0(F_0^\lor)^*,S^*)$, we can construct three bases, $\{\lambda_i\}_{i=1}^{f_0}$ for $\Lambda$, $\{u_i\}_{i=1}^{N+1}$ for $H^0(F_0^\lor)^*$, with $N+1:=h^0(F_0^\vee)$ and $\{v_i\}_{i=1}^s$ for $S^*$, with $v_1=s_0$, such that
\[
\lambda_i: H^0(F_0^\vee)^*\to S^*
\]
$$
\lambda_i(u_j) = \left\{
\begin{array}{cl}
v_1 & \mbox{if}\:\: i=j,\\
0 & \mbox{if}\:\: i \neq j.
\end{array}
\right.
$$
In $\cite{AM}$ the authors proved that the tangent space of the jumping variety at a jumping pair can be also described as
\begin{equation}\label{eq-tang}
T_\Lambda \tilde{J}(E) = \left\{ \psi \in \Hom\left(\Lambda,\frac{T^*}{\Lambda}\right) \: | \:
\begin{array}{ll}
(\psi(\lambda_i))(\ker \lambda_i) \subset \langle v_1 \rangle,\\
(\psi(\lambda_i))(u_i) \equiv (\psi(\lambda_j))(u_j) \mod v_1, \,\,i\neq j
\end{array}
\right\}.
\end{equation}
Using this description, we are able to obtain an upper bound for the dimension of $T_\Lambda \tilde{J}(E)$ and hence an upper bound for $\tilde{J}(E)$.
\begin{thm}\label{thm-uppbound}
Let $E$ be a reduced $(F_0,\OO_X)$-Steiner bundle on $X$ and let $\sigma:X\to \GG(f_0-1, \PP(H_0(F_0^\lor)))$ be the natural morphism. For every $\Lambda \in \tilde{J}(E)$,
$$
\dim T_\Lambda \tilde{J}(E) \leq  f_0\left( t - \dim \sigma(X) - f_0 s +1\right).
$$
In particular, $\dim\tilde{J}(E) \leq  f_0\left( t - \dim \sigma(X) - f_0 s +1\right)$.
\end{thm}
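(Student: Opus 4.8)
The plan is to read the bound directly off the explicit description \eqref{eq-tang} of $T_\Lambda\tilde J(E)$, by exploiting its two defining conditions to fix the shape of a tangent vector and then imposing membership in $T^*_0=\image\varphi$. Write $\Lambda=s_0\otimes\Gamma$ and fix the adapted bases $\{\lambda_i\}_{i=1}^{f_0}$, $\{u_j\}_{j=1}^{N+1}$, $\{v_k\}_{k=1}^{s}$ of the excerpt, so that $\lambda_i=v_1\otimes\gamma_i$ with $\gamma_i\in H^0(F_0^\vee)$ the functional dual to $u_i$ and $\Gamma=\langle\gamma_1,\dots,\gamma_{f_0}\rangle$. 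For a tangent vector $\psi$ I regard each $\psi(\lambda_i)\in T^*/\Lambda$ as a class of morphisms $H^0(F_0^\vee)^*\to S^*$ and split it into its $v_1$-component, a functional $r^{(i)}\in H^0(F_0^\vee)$ well defined modulo $\Gamma$, and its component $\bar M^{(i)}$ in $S^*/\langle v_1\rangle$.

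First I would use the two conditions of \eqref{eq-tang}. The condition $(\psi(\lambda_i))(\ker\lambda_i)\subset\langle v_1\rangle$ says that $\bar M^{(i)}$ annihilates every $u_j$ with $j\neq i$, hence $\bar M^{(i)}=\bar m^{(i)}\otimes\gamma_i$ for some $\bar m^{(i)}\in S^*/\langle v_1\rangle$; the congruence condition $(\psi(\lambda_i))(u_i)\equiv(\psi(\lambda_j))(u_j)\bmod v_1$ then forces all the $\bar m^{(i)}$ to coincide with a single vector $\bar m$. Thus $\psi$ is completely encoded by the data $(r^{(1)},\dots,r^{(f_0)};\bar m)$ with $\psi(\lambda_i)\equiv s_0\otimes r^{(i)}+\bar m\otimes\gamma_i\pmod\Lambda$, and the only surviving constraint is that each of these classes lifts to $T^*_0$.

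The heart of the matter is to convert the requirement $s_0\otimes r^{(i)}+\bar m\otimes\gamma_i\in T^*_0$ into a count governed by $\dim\sigma(X)$. Since $E$ is reduced, $\varphi$ is injective by Lemma \ref{deflemma}, so $\dim T^*_0=t$ and $\varphi^*$ is surjective; dualising, $(T^*_0)^\perp=\ker(\varphi^*)$, and the fibrewise surjectivity of $\widetilde\varphi_x$ of Proposition \ref{linalg} is equivalent to $\ker(\varphi^*)\cap\bigl(S\otimes\sigma(x)\bigr)=0$ for every $x\in X$, where $\sigma(x)=(F_0)_x\subset H^0(F_0^\vee)^*$. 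I would then measure the solution space of the linear system $(\star_i)\colon \langle\xi,\,s_0\otimes r^{(i)}+\bar m\otimes\gamma_i\rangle=0$ for all $\xi\in\ker(\varphi^*)$, as $r^{(i)}$ ranges over $H^0(F_0^\vee)/\Gamma$ and $\bar m$ over $S^*/\langle v_1\rangle$. The family $\{\,S\otimes\sigma(x)\,\}_{x\in X}$ of subspaces meeting $\ker(\varphi^*)$ trivially sweeps out a subvariety of $G(f_0,H^0(F_0^\vee))$ of dimension $\dim\sigma(X)$, and it is exactly this geometry that bounds the number of independent constraints the system $(\star_i)$ imposes on the top rows $r^{(i)}$. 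Assembling this count over the $f_0$ blocks $i=1,\dots,f_0$ and discarding the $f_0s-1$ directions absorbed by the tensor structure $s_0\otimes(-)$ together with the single common factor $\bar m$ should yield $\dim T_\Lambda\tilde J(E)\le f_0\bigl(t-\dim\sigma(X)-f_0s+1\bigr)$.

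Finally, the displayed bound on $\dim\tilde J(E)$ is immediate: for every point $\Lambda$ of a variety one has $\dim_\Lambda\tilde J(E)\le\dim T_\Lambda\tilde J(E)$, so choosing $\Lambda$ on a component of maximal dimension gives the inequality at once. The step I expect to be the main obstacle is precisely the crux of the previous paragraph, namely turning the trivial-intersection property $\ker(\varphi^*)\cap(S\otimes\sigma(x))=0$ into the exact codimension $\dim\sigma(X)$ of constraints on the $r^{(i)}$; this is where the geometry of the morphism $\sigma$, and not merely the integers $t,s,f_0$, genuinely enters, and it demands controlling how the constraints coming from distinct points $x\in X$ overlap rather than summing them naively (compare \eqref{eq-jump}). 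The remaining bookkeeping, separating the honest constraints from the $f_0s-1$ tensor directions and verifying that the $f_0$ blocks contribute independently, is routine once this point is settled.
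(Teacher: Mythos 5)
Your reduction of the tangent space is correct and useful: using the two conditions in \eqref{eq-tang} to force $\bar{M}^{(i)}=\bar{m}\otimes\gamma_i$ with a single common $\bar{m}$, so that $T_\Lambda\tilde{J}(E)$ is identified with the space of tuples $(r^{(1)},\dots,r^{(f_0)};\bar{m})$ for which each $s_0\otimes r^{(i)}+\bar{m}\otimes\gamma_i$ lies in $T_0^*$ modulo $\Lambda$, is an accurate linear-algebra reformulation, and the final passage from the pointwise tangent bound to the bound on $\dim\tilde{J}(E)$ is indeed immediate. But the argument stops exactly where the theorem begins: the entire content of the statement is the quantitative claim that liftability to $T_0^*$ costs at least roughly $\dim\sigma(X)+(s-1)f_0$ dimensions per block, and you explicitly defer this step (``should yield'', ``the main obstacle''). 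Nothing in your text converts the pointwise condition $\ker(\varphi^*)\cap\bigl(S\otimes\sigma(x)\bigr)=0$ into a count of independent linear constraints; as you yourself observe, the constraints coming from distinct points $x\in X$ overlap, and without an argument controlling that overlap the quantity $\dim\sigma(X)$ need not appear in the bound at all. This is therefore a genuine gap, not deferred bookkeeping.

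For comparison, the paper closes precisely this gap by a global-sections argument, run in the dual direction: instead of parametrizing $T_\Lambda\tilde{J}(E)$ it exhibits explicit elements of $\Hom\left(\Lambda,\frac{T^*}{\Lambda}\right)$ violating \eqref{eq-tang} that are independent modulo the tangent space. The key inputs are that the induced maps $\varphi_1:\frac{T^*}{\Lambda}\to\Hom\left(H^0(F_0^\lor)^*,\frac{S^*}{\langle v_1\rangle}\right)$ and $\varphi_2$ (the further restriction to $\ker\lambda_1$) still define Steiner bundles, that their images are identified with spaces of global sections generating the bundle $\frac{S^*}{\langle v_1 \rangle}\otimes\UU^\lor$ restricted to $\sigma(X)$, respectively to $\sigma(X)\cap\GG(f_0-1,\PP((\ker\lambda_1)^*))$, and that Porteous' formula then forces $\dim\image\varphi_1\geq\dim\sigma(X)+(s-1)f_0$ and $\dim\image\varphi_2\geq\dim\sigma(X)+(s-2)f_0$; this is exactly the mechanism by which $\dim\sigma(X)$ enters. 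In your setup the same input is what is needed: for fixed $\bar{m}$ the admissible $r^{(i)}$ form a coset of $\left(\langle s_0\rangle\otimes H^0(F_0^\lor)\right)\cap T_0^*$, and bounding that intersection from above is equivalent to bounding $\dim\image\varphi_1$ from below, i.e.\ to the same Porteous step. Note also that even granting this step, your assembly is not routine: counting symmetrically over the $f_0$ blocks and allowing all $\bar{m}\in S^*/\langle v_1\rangle$ gives a bound that exceeds the claimed one by $s-1-f_0$ when $s>f_0+1$, which is why the paper needs the second, finer estimate (via $\varphi_2$ on the subgrassmannian) for the block $i=1$. Until both the Porteous-type input and this asymmetric count are supplied, the proposal remains an unproven plan.
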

\begin{proof}
Recall that the tangent space of the jumping variety at a jumping point is a vector subspace of  $\Hom\left(\Lambda,\frac{T^*}{\Lambda}\right)$. We will prove the result by defining independent elements in $\Hom\left(\Lambda,\frac{T^*}{\Lambda}\right)$ which are also independent modulo $T_\Lambda \tilde{J}(E)$. In order to do so, we will look for morphisms in $\frac{T^*}{\Lambda}$ which do not satisfy the conditions in (\ref{eq-tang}).\\

Let us consider the following diagram
$$
\xymatrix{
T^* \ar@{^{(}->}[r]^<<<<<{\varphi} \ar@{->>}[d] & \Hom\left(H^0(F_0^\lor)^*,S^*\right) \ar@{->>}[d] \\
\frac{T^*}{\Lambda} \ar[r]^<<<<<{\varphi_1} & \Hom\left(H^0(F_0^\lor)^*, \frac{S^*}{\langle v_1 \rangle} \right)
}
$$
Observe that the linear application $\varphi_1$ also defines an $(F_0,\OO_X)$-Steiner bundle $\tilde{E}$.

We want to estimate the dimension of the image of $\varphi_1$.
We first notice that the vector space $\Hom\left(H^0(F_0^\lor)^*, \frac{S^*}{\langle v_1 \rangle} \right)$ can be identified with the global sections of the bundle
$$
\frac{S}{\langle v_1 \rangle^*} \otimes \UU^\lor \longrightarrow \GG(f_0-1,\PP(H^0(F_0^\lor))).
$$
Then, the image of $\varphi_1$ can be therefore identified with the global sections of the restriction of the previous bundle to $\sigma(X)$, where $\sigma$ is the natural morphism $\sigma: X \longrightarrow \GG(f_0-1, \PP(H_0(F_0^\lor)))$.
Consider the bundle morphism
$$
\xymatrix{
\OO_{\sigma(X)}^\alpha \ar[rr]^g \ar[dr] & & \frac{S^*}{\langle v_1 \rangle} \otimes \UU^\lor \ar[ld]\\
& \sigma(X)
}
$$
By Porteous' formula, the morphism $g$ is not surjective if $\alpha \leq \dim \sigma(X) + (s-1)f_0-1$. We thus have at least $\dim \sigma(X) + (s-1)f_0$ independent morphisms in $\image\varphi_1$ (and hence $\mu_1,\ldots, \mu_{\dim \sigma(X) + (s-1)f_0}$ morphisms in $T^*/\Lambda$).

Let us construct morphisms $\psi_{i,j}$ belonging to $\Hom\left(\Lambda, \frac{T^*}{\Lambda}\right)$ in the following way:
\[
\psi_{i,j}: \Lambda \to T^*/ \Lambda
\]
\[
\psi_{i,j}(\lambda_k) = \left\{
\begin{array}{cl}
\mu_j & \mbox{if}\:\: i=t,\\
\lambda_t & \mbox{if}\:\: i \neq t,
\end{array}
\right. \]
for $i=2,\ldots,f_0$ and $j=1,\ldots,\dim \sigma(X) + (s-1)f_0$. Then we  have $(f_0-1)(\dim \sigma(X)+(s-1)f_0)$ of such independent morphisms and it can be easily verified that they do not satisfy any of the conditions in the definition of the tangent space.\\
Let us now extend the previous diagram to the following one:
$$
\xymatrix{
T^* \ar@{^{(}->}[r]^<<<<<{\varphi} \ar@{->>}[d] & \Hom\left(H^0(F_0^\lor)^*,S^*\right) \ar@{->>}[d]\\
\frac{T^*}{\Lambda} \ar[r]^<<<<<{\varphi_1} \ar[dr]^{\varphi_2} & \Hom\left(H^0(F_0^\lor)^*, \frac{S^*}{\langle v_1 \rangle} \right)\ar@{->>}[d] \\
& \Hom\left(\ker \lambda_1, \frac{S^*}{\langle v_1 \rangle}\right)
}
$$
We want to estimate the dimension of the image of $\varphi_2$.\\
As before, $\Hom\left(H^0(F_0^\lor)^*, \frac{S^*}{\langle v_1 \rangle} \right)$ can be identified with the global sections of the bundle
$$
\frac{S}{\langle v_1 \rangle^*} \otimes \UU^\lor \longrightarrow \GG(f_0-1,\PP(H^0(F_0^\lor))).
$$
and the image of $\varphi_1$ can be identified with the global sections of the restriction of the previous bundle to $\sigma(X)$.

Finally, if we take the subgrassmannian $\GG(f_0-1,\PP((\ker \lambda_1)^*))$, the image of $\varphi_2$ can be identified with the global sections of the vector bundle
$$
\frac{S}{\langle v_1 \rangle} \otimes \UU^\lor_{|\sigma(X) \cap \GG(f_0-1,\PP(\ker \lambda_1)^*))} \longrightarrow \sigma(X) \cap \GG(f_0-1,\PP(\ker \lambda_1)^*)).
$$
Denoting $W=\sigma(X) \cap \GG(f_0-1,\PP(\ker \lambda_1)^*))$, consider the bundle morphism
$$
\xymatrix{
\OO_W^\alpha \ar[rr]^g \ar[dr] & & \frac{S^*}{\langle v_1 \rangle} \otimes \UU^\lor_{|W} \ar[ld]\\
& W
}
$$
By Porteous' formula, the morphism $g$ is not surjective if $\alpha \leq \dim \sigma(X) + (s-2)f_0 -1$. This means that we can find at least $\dim \sigma(X) + (s-2)f_0$ independent morphisms in the image of $\varphi_2$ and hence $\tilde{\mu}_j\in T^*/\Lambda$, with $j=1,\ldots, \dim \sigma(X) + (s-2)f_0$. Define a new set $\{\psi_{1,j}\}_{j=1}^{\dim \sigma(X) + (s-2)f_0} \subset \Hom\left(\Lambda, \frac{T^*}{\Lambda}\right)$, by
\[
\psi_{1,j}: \Lambda \to T^*/ \Lambda
\]
\[
\psi_{1,j}(\lambda_k) = \left\{
\begin{array}{cl}
\tilde{\mu}_j & \mbox{if}\:\: k=1,\\
\lambda_t & \mbox{if}\:\: k \neq 1,
\end{array}
\right. \]
and not satisfying the conditions defining the tangent space.

The morphisms $\psi_{i,j}$ just constructed are linearly independent elements in the complementary vector space of $T_\Lambda \tilde{J}(E)$ and so the theorem is proved.
\end{proof}

\section{The classification}
The main goal in this section is to classify all $(F_0,\OO_X)$-Steiner bundles whose jumping variety has maximal dimension. In particular, we prove that they are always Schwarzenberger bundles.

Recall (see (\ref{vbK})) that if $E$ is an $(F_0,\OO_X)$-Steiner bundle on a smooth projective variety $X$ we have an the exact sequence given by
$$
0 \longrightarrow Q \longrightarrow H^0(F_0^\lor) \otimes \OO_X \longrightarrow F_0^\lor \longrightarrow 0.
$$

Let us first state the theorem. The rest of the section will be devoted to its proof.
\begin{thm}\label{thm-classif}
Let $E$ be a reduced $(F_0,\OO_X)$-Steiner bundle on a smooth projective variety $X$ such that the jumping locus $\tilde{J}(E)$ has maximal dimension. Suppose that the morphism $\sigma: X \longrightarrow \GG(f_0-1,\PP(H^0(F_0^\lor)))$ is generically finite.

If $\dim X \geq s\cdot \rk(Q)$, then $E \simeq S \otimes Q^\lor$.

If $\dim X < s \cdot\rk(Q)$, then $E$ is one of the following:
\begin{description}
\item{i)} a Schwarzenberger bundle given by the triple
$$\left(\tilde{J}(E), |\pi_2^*(\OO_{\PP^N}(1))|, \pi_1^*(\OO_{\PP^1}(s-1))\right).$$
In this case $f_0=1$, $\tilde{J}(E)$ is a rational normal curve and we have the natural projections
$$
\begin{array}{l}
\tilde{J}(E) \stackrel{\pi_1}{\longrightarrow}  \Sigma(E) \simeq \PP^1 \vspace{2mm}\\
\tilde{J}(E) \stackrel{\pi_2}{\longrightarrow}  \PP^N
\end{array}
$$
with $N=h^0(F_0^\lor)-1$;\\
\item
{ii)} a Schwarzenberger bundle given by the triple
$$\left(\tilde{J}(E), |\pi_2^*(\UU^\lor)|, \pi_1^*(\OO_{\PP(S)}(1))\right).$$
In this case $s \leq f_0+1$, $\tilde{J}(E)$ is the projectivization of a Grassmannian bundle constructed from a rational normal scroll and we have the natural projections
$$
\begin{array}{l}
\tilde{J}(E) \stackrel{\pi_1}{\longrightarrow}  \Sigma(E) \simeq \PP(S) \vspace{2mm}\\
\tilde{J}(E) \stackrel{\pi_2}{\longrightarrow}  \GG\left(f_0-1,\PP(H^0(F_0^\lor))\right);
\end{array}
$$
\item
{iii)} a Schwarzenberger bundle given by the triple
$$\left(\tilde{J}(E), |\pi_2^*(\UU^\lor)|, \OO_{\tilde{J}(E)}(1))\right).$$
In this case $f_0>1$, $\tilde{J}(E) \simeq \Sigma(E)$ and we have the natural projection
$$
\xymatrix{
 \tilde{J}(E) \ar[r]^<<<<<{\pi_2} & \GG\left(f_0-1,\PP(H^0(F_0^\lor))\right);
}
$$
\item{iv)} a Schwarzenberger bundle given by the triple
$$\left(\tilde{J}(E), |\pi_2^*(\OO_{\PP^1}(1))|, \pi_1^*(\OO_{\Sigma(E)}(1))\right).$$
In this case $f_0=1$, $s\geq 3$, $\tilde{J}(E)$ is a rational normal scroll and we have the natural projections
$$
\begin{array}{l}
\tilde{J}(E) \stackrel{\pi_1}{\longrightarrow}  \Sigma(E)  \vspace{2mm}\\
\tilde{J}(E) \stackrel{\pi_2}{\longrightarrow}  J(E) \simeq \PP^1;
\end{array}
$$
\item{v)} a Schwarzenberger bundle given by the triple
$$\left(\tilde{J}(E), |\pi_2^*(\OO_{\PP^2}(1))|, \pi_1^*(\OO_{\PP^2}(1))\right).$$
In this case $f_0=1$, $s=3$, $\tilde{J}(E)$ is a Veronese surface and we have the natural projections
$$
\begin{array}{l}
\tilde{J}(E) \stackrel{\pi_1}{\longrightarrow}  \Sigma(E)\simeq \PP^2 \vspace{2mm}\\
\tilde{J}(E) \stackrel{\pi_2}{\longrightarrow}  J(E) \simeq \PP^2.
\end{array}
$$
\end{description}
\end{thm}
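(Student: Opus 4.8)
The plan is to separate the two numerical regimes, reduce the positive-dimensional case to the reconstruction statement of Lemma~\ref{lem-class}(iii), and then run a geometric classification of the jumping variety through the theory of varieties of minimal degree.

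First I would dispose of the regime $\dim X\geq s\cdot\rk(Q)$. Since $\sigma$ is assumed generically finite, Remark~\ref{gen-fin} guarantees that every Chern class of $S\otimes Q$ is non-zero, so Proposition~\ref{prop-trivial} applies and yields $E\simeq(S\otimes Q)\oplus\OO_X^p$ for some $p\geq 0$. Reducedness of $E$ means, by Lemma~\ref{deflemma}, that $H^0(E^\vee)=0$; as each trivial summand $\OO_X$ would contribute a global section to $E^\vee$, this forces $p=0$. With the dualized convention for $Q$ used in this section (the sequence (\ref{vbK}) read with $F_0^\vee$ on the right), this is exactly $E\simeq S\otimes Q^\vee$.

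The substance is the regime $\dim X<s\cdot\rk(Q)$, where the hypothesis is that $\tilde J(E)$ attains the upper bound of Theorem~\ref{thm-uppbound}, i.e.\ $\dim\tilde J(E)=f_0(t-\dim X-f_0 s+1)$ (using $\dim\sigma(X)=\dim X$). Because $\dim\tilde J(E)\leq\dim T_\Lambda\tilde J(E)\leq f_0(t-\dim X-f_0 s+1)$ at every jumping pair $\Lambda$, equality forces $\tilde J(E)$ to be smooth at each $\Lambda$ and, crucially, forces every Porteous degeneracy estimate in the proof of Theorem~\ref{thm-uppbound} to be sharp. I would exploit this sharpness, for the index pair $(a,b)=(1,f_0)$, to show that the three restriction maps $\alpha,\beta$ and the $\mathcal{Q}$-map appearing in Lemma~\ref{lem-class}(iii) are isomorphisms; here $\mathcal{A}=\OO_{\PP(S)}(1)$ and $\mathcal{B}=\UU^\vee$. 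Granting this, Lemma~\ref{lem-class}(iii) identifies $E=E_0$ with the Schwarzenberger bundle attached to the triple $(\tilde J(E),\,|\pi_2^*\mathcal{B}|,\,\pi_1^*\mathcal{A})$, so that $E$ is \emph{a priori} Schwarzenberger and only the geometric type of $\tilde J(E)$ and of the two projections remains to be pinned down.

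The final and hardest step is this geometric classification. Using the incidence $\tilde J(E)=\image\nu\cap\GG(f_0-1,\PP(T_0))$ of (\ref{eq-jump}), I would study the projections $\pi_1\colon\tilde J(E)\to\Sigma(E)\subset\PP(S)$ and $\pi_2\colon\tilde J(E)\to J(E)\subset\GG(f_0-1,\PP(H^0(F_0^\vee)^*))$, and translate the attained dimension into the statement that the relevant nondegenerate image meets the bound $\deg\geq\codim+1$ with equality. The del Pezzo--Bertini classification of varieties of minimal degree then restricts the possibilities to linear spaces, rational normal scrolls (rational normal curves included) and the Veronese surface $v_2(\PP^2)\subset\PP^5$. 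Matching each of these against the ranks $f_0$ and $s$ produces the five cases: for $f_0=1$ the rational normal curve, the rational normal scroll and the Veronese surface give (i), (iv) and (v) respectively, while for $f_0>1$ the two Grassmann-bundle structures give (ii) and (iii). The main obstacle is precisely this matching: one must check in each geometric type that the triple produced by Lemma~\ref{lem-class}(iii) coincides with the listed one, and that the numerical constraints $s\leq f_0+1$ in (ii), $f_0>1$ in (iii), $s\geq 3$ in (iv) and $s=3$ in (v) are exactly the conditions under which the corresponding minimal-degree variety can arise.
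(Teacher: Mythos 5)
Your handling of the regime $\dim X\geq s\cdot\rk(Q)$ is correct and matches the paper: Remark \ref{gen-fin} makes Proposition \ref{prop-trivial} applicable, and reducedness kills the trivial summands. The gap lies in the other regime, which is the heart of the theorem. You propose to obtain the Schwarzenberger structure \emph{first}, claiming that maximality of $\dim\tilde{J}(E)$ (``sharpness of the Porteous estimates'' in Theorem \ref{thm-uppbound}) forces the three restriction maps in Lemma \ref{lem-class}(iii) to be isomorphisms, and only afterwards to pin down the geometry. But those hypotheses --- that the sections of $\mathcal{A}$, $\mathcal{B}$ and $\mathcal{Q}$ restrict isomorphically to $\tilde{J}(E)$ --- are non-degeneracy and linear-normality statements about $\tilde{J}(E)$ and its two projections inside the ambient Grassmannians; nothing in the proof of Theorem \ref{thm-uppbound} produces them, and you offer no mechanism for extracting them from an equality of dimensions. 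The paper's logic runs in the opposite direction: it first determines the geometry of $\tilde{J}(E)$, $\Sigma(E)$, $J(E)$ and of the projections $\pi_1,\pi_2$, and only then invokes Lemma \ref{lem-class} to read off the triple. As written, you assume the output of the geometric analysis in order to skip it.

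The second, related gap is that you are missing the engine that carries out this analysis, namely the induction on $s$ (the ``induction technique''): quotienting by a jumping pair $\Lambda=s_0\otimes\Gamma$ produces a smaller Steiner bundle $E'$ with spaces of dimensions $s-1$ and $t-f_0$ (diagram (\ref{diag-ind})), and the classification rests on facts imported from \cite{AM} --- $\Sigma(E)$ is a variety of minimal degree, $\tilde{J}(E_0')$ is birational to $\Sigma(E)$, $pr_{s_0}$ is a projection from an inner point, and the fibers of the first-component projections have dimension either $0$ or at least $f_0$. It is this last dichotomy that forces all projections to be birational when $f_0>1$ (case (iii)), and the birational/non-birational alternative along the induction when $f_0=1$ that separates the rational normal curve (i) from the scroll (iv) and, at $s=3$, the Veronese surface (v); the base case $s\leq f_0+1$ (case (ii)) needs its own argument that $\pi_1$ is surjective. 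Your substitute --- ``translate the attained dimension into $\deg=\codim+1$ and invoke del Pezzo--Bertini'' --- names the right classification tool, since minimal degree varieties do appear, but gives no argument for the degree equality. Your final matching is also off: case (ii) is not confined to $f_0>1$ (it contains $f_0=1$, $s=2$), and the split between (i) and (iv) is not decided by which minimal degree variety $\tilde{J}(E)$ is in the abstract, but by whether birationality of the projections survives each step of the induction.
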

\begin{remark}
If the morphism $\sigma: X \longrightarrow \GG(f_0-1,\PP(H^0(F_0^\lor)))$ is surjective, then an $(F_0,\OO_X)$-Steiner bundle $E$ on $X$ induces a  $(\UU,\OO_\GG)$-Steiner bundle $\bar{E}$ on the Grassmannian, with $E = \sigma^*(\bar{E})$. Moreover, $\tilde{J}(E)$ has maximal dimension if and only if $\tilde{J}(\bar{E})$ has maximal dimension, according to the respective bounds. Therefore, when $\sigma$ is surjective and $\tilde{J}(E)$ has maximal dimension, all Steiner bundles on $X$ are Schwarzenberger bundles given by the pullback of the corresponding Schwarzenberger bundle on $\GG(f_0-1,\PP(H^0(F_0^\lor)))$, classified in \cite{AM}. Therefore, we can suppose, from now on, that $\sigma$ is not surjective
\end{remark}

Let $E$ be a reduced $(F_0,\OO_X)$-Steiner bundle.

The first statement in the theorem, if $\dim X \geq s \cdot\rk(Q)$, follows from Proposition \ref{prop-trivial} when $E$ is reduced (recall also Remark \ref{gen-fin}).

Suppose now that $\dim X < s \cdot\rk(Q)$.
Take $\Lambda = s_0 \otimes \Gamma \in \tilde{J}(E)$. We have the
following commutative diagram:
\begin{equation}\label{diag-ind}
\xymatrix{
T^* \ar@{^{(}->}[r]^<<<<<{\varphi} \ar[d] & S^* \otimes H^0(F_0^\lor) \ar[d]^{pr\otimes id} \\
\frac{T^*}{\Lambda} \ar[r]^<<<<<{\varphi'} & \frac{S^*}{\langle s_0 \rangle} \otimes H^0(F_0^\lor)
}
\end{equation}
The morphism $\varphi'$ defines a new $(F_0,\OO_X)$-Steiner bundle $E'$, whose defining vector spaces $S'$ and $T'$ have dimension $s-1$ and $t-f_0$, respectively. Let $E'_0$ be its reduced summand.
Iterating this process, which we will call \emph{induction technique}, we will see that we always eventually arrive to a known case of a reduced $(F_0,\OO_X)$-Steiner bundle that can be described as a Schwarzenberger bundle.

Moreover, we also have a diagram
\begin{equation}\label{diag-clas}
\xymatrix{
& \tilde{J}(E) \ar[dl]_{\pi_1} \ar[d] \\
\Sigma(E) \ar[d]_{pr_{s_0}} & \tilde{J}(E'_0) \ar[dl]_{\pi_1'} \\
\Sigma(E_0')
}
\end{equation}
Properties in Theorems 4.3 and 4.4 in \cite{AM} will still hold. In particular, $\Sigma(E)$ will always be a minimal degree variety,  $\tilde{J}(E_0')$ is birational to $\Sigma(E)$ and the morphism $pr_{s_0}$ is a projection from an inner point $s_0 \in \Sigma(E)$, hence $\dim \Sigma(E'_0) \leq \dim \Sigma(E) \leq \dim \Sigma(E'_0) +1$. 

The generic fibers of $\pi_1$, $\pi_1'$, and the further first component projections given by the induction technique, have dimension either 0 or at least $f_0$.

\subsection{The case $s \leq f_0+1$}

Suppose that $s \leq f_0+1$ and consider an $(F_0, \OO_X)$-Steiner bundle $E$ on $X$ whose jumping variety $\tilde{J}(E)$ has maximal dimension, i.e.
$$
\dim \tilde{J}(E) = f_0\left(t-\dim X - s f_0 +1 \right).
$$
\begin{prop}
If $s \leq f_0+1$ then the projection $\pi_1: \tilde{J}(E) \longrightarrow \PP(S)$ is surjective.
\end{prop}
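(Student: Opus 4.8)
The plan is to establish the equivalent statement $\dim\Sigma(E)=s-1$: since $\Sigma(E)=\pi_1(\tilde{J}(E))$ is an irreducible variety of minimal degree, hence nondegenerate, in $\PP(S)\cong\PP^{s-1}$, the equality of dimensions forces $\Sigma(E)=\PP(S)$ and thus the surjectivity of $\pi_1$. I would argue by induction on $s$. The base case $s=1$ is immediate, because then $\PP(S)$ is a point and $\tilde{J}(E)=\mathbb{G}(f_0-1,\PP(\image\varphi))\neq\emptyset$ (for a reduced bundle $t>f_0$). For the inductive step I fix a general jumping pair $\Lambda=s_0\otimes\Gamma$ and form, via diagram (\ref{diag-ind}), the bundle $E'$ and its reduced summand $E'_0$; it has parameter $s'=s-1\leq f_0+1$ and is built from the same morphism $\sigma$, so the inductive hypothesis will be applicable to it once I know that its jumping variety again has maximal dimension.

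The engine of the induction is the descent of maximality from $E$ to $E'_0$. Over a general $[s_0]\in\Sigma(E)$ the fibre of $\pi_1$ is the Grassmannian of $f_0$-dimensional subspaces of $V_{s_0}=\{b\in H^0(F_0^\vee)\mid s_0\otimes b\in\image\varphi\}$; identifying $V_{s_0}$ with the kernel of the composite $T^*\to (S^*/\langle s_0\rangle)\otimes H^0(F_0^\vee)$ and factoring it through $\varphi'$ gives $\dim V_{s_0}=f_0+\dim\ker\varphi'$, so that this fibre has dimension $f_0\dim\ker\varphi'$ and $\dim\tilde{J}(E)=\dim\Sigma(E)+f_0\dim\ker\varphi'$. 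Writing $\dim\ker\varphi'=(t-f_0)-\dim\image\varphi'$ and substituting the maximality hypothesis $\dim\tilde{J}(E)=f_0(t-\dim X-f_0 s+1)$ yields, after simplification,
\[
\dim\Sigma(E)=f_0\bigl(\dim\image\varphi'-\dim X-f_0(s-1)+1\bigr).
\]
Since $\tilde{J}(E'_0)$ is birational to $\Sigma(E)$ and $\image\varphi'$ is precisely the space defining $E'_0$, the right-hand side is exactly the upper bound of Theorem \ref{thm-uppbound} for $E'_0$; hence $\tilde{J}(E'_0)$ attains that bound and $E'_0$ is maximal. The inductive hypothesis then gives that $\pi_1'$ is surjective, i.e. $\dim\Sigma(E'_0)=s-2$.

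It remains to combine this with the projection $pr_{s_0}$ of diagram (\ref{diag-clas}). Being a projection from an inner point it satisfies $s-2=\dim\Sigma(E'_0)\leq\dim\Sigma(E)\leq\dim\Sigma(E'_0)+1=s-1$, so $d:=\dim\Sigma(E)\in\{s-2,s-1\}$, and I must exclude $d=s-2$. For this I would use two facts. First, the general fibre of $\pi_1$ being a Grassmannian of the form $\mathbb{G}(f_0-1,\PP(V_{s_0}))$, its dimension is a multiple of $f_0$; as $\dim\tilde{J}(E)=f_0(t-\dim X-f_0 s+1)$ is a multiple of $f_0$, so is $d$. When $s\geq 3$ this makes $d=s-2$ impossible, because $1\leq s-2\leq f_0-1$ cannot be a nonzero multiple of $f_0$ under the hypothesis $s\leq f_0+1$. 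When $s=2$ the value $d=s-2=0$ survives the divisibility test, but is excluded by the nondegeneracy of $\Sigma(E)$: a variety of minimal degree in $\PP^1$ cannot be a single point. In either case $d=s-1$, proving surjectivity.

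I expect the crux to be the descent of maximality in the second paragraph: one must verify that the generic fibre of $\pi_1$ is genuinely governed by $\ker\varphi'$ for a general $s_0$ (so that the fibre–dimension formula holds on the component dominating $\Sigma(E)$), and that the birational identification $\tilde{J}(E'_0)\sim\Sigma(E)$ together with the bookkeeping of $t$, $s$ and $\dim\image\varphi'$ reproduces the bound of Theorem \ref{thm-uppbound} for $E'_0$ as an exact equality rather than an inequality. A secondary point is to make sure the inductive hypothesis also covers the degenerate range $\dim X\geq (s-1)\rk(Q)$, where $E'_0$ is a trivial bundle and the surjectivity of $\pi_1'$ has to be read off directly from Proposition \ref{prop-trivial}.
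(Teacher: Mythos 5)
Your argument hinges on the ``descent of maximality'' step, and that step contains a genuine gap: the fibre-dimension formula $\dim\tilde{J}(E)=\dim\Sigma(E)+f_0\dim\ker\varphi'$ is never proved and cannot hold in the generality you need. The set $\tilde{J}(E)=\image\nu\cap\GG(f_0-1,\PP(T_0))$ is a closed intersection that may be reducible, and its top-dimensional component need not dominate $\Sigma(E)$; all that is true a priori is an inequality, namely that some fibre satisfies $\dim\pi_1^{-1}(s_0)\geq\dim\tilde{J}(E)-\dim\Sigma(E)$. Worse, your formula is self-defeating: since every fibre $\pi_1^{-1}(s_0)\simeq G(f_0,V_{s_0})$ is a full Grassmannian, the formula forces $f_0\mid\dim\Sigma(E)$, which is incompatible with the very conclusion $\dim\Sigma(E)=s-1$ you are trying to prove whenever $f_0\nmid s-1$. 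Your own case analysis exhibits the inconsistency: for $s=2$ and $f_0\geq 2$ you conclude $d=1$ by nondegeneracy, yet your divisibility claim forbids $d=1$; and for $3\leq s\leq f_0$ divisibility excludes both $d=s-2$ and $d=s-1$. So either the formula fails, or your argument proves the (clearly unintended) vacuous statement that no maximal bundles exist in those ranges. Since this step is the engine of your induction --- and is precisely the point you defer as ``the crux'' --- the proof is incomplete.

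For comparison, the paper avoids the trap by arguing by contradiction and using only the harmless direction of the fibre estimate. If $\pi_1$ were not surjective, then $\dim\Sigma(E)<s-1\leq f_0$, so some fibre $G(f_0,V_{s_0})$ has dimension greater than $f_0(t-\dim X-sf_0)$, whence $\dim V_{s_0}>t-\dim X-(s-1)f_0$ and therefore $\dim\image\varphi'=t-\dim V_{s_0}<\dim X+(s-1)f_0$. When $\dim X<(s-1)\cdot\rk(Q)$ this contradicts the rank bound of Proposition \ref{thm-rank} applied to the Steiner bundle defined by $\varphi'$; when $\dim X\geq(s-1)\cdot\rk(Q)$, Proposition \ref{prop-trivial} makes $E_0'$ trivial, so $\tilde{J}(E_0')$ is the whole Segre variety, contradicting its birationality with $\Sigma(E)$, which would have dimension $s-2$. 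No induction on $s$, no fibre-dimension equality, and no divisibility argument are needed; the only global inputs are Propositions \ref{thm-rank} and \ref{prop-trivial}.
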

\begin{proof}
Suppose that $\pi_1$ is not surjective, which implies that $\dim \Sigma(E) < s -1\leq f_0 $. Hence, for each $s_0 \in \Sigma(E)$, we have that $\dim \pi_1^{-1}(s_0) =\dim\tilde{J}(E)-\dim\Sigma(E)> f_0(t-\dim X -  s f_0 )$. Since $\pi_1^{-1}(s_0) \simeq G(f_0,\left(\langle s_0 \rangle \otimes H^0(F_0^\lor) \right) \cap T^*)$, we get that $\dim \left(\left(\langle s_0 \rangle \otimes H^0(F_0^\lor) \right) \cap T^* \right) > t - \dim X + (s-1)f_0$. Consider the following diagram, obtained by taking a jumping pair $\Lambda = s_0 \otimes \Gamma \in \tilde{J}(E)$:
$$
\xymatrix{
 & 0 \ar[d] \\
 & \langle s_0 \rangle \otimes H^0(F_0^\lor) \ar[d] \\
 T^* \ar@{->>}[d]_{\pi} \ar[ur]^<<<<<<<<{\varphi''} \ar@{^{(}->}[r]^<<<<<<{\varphi} & S^* \otimes H^0(F_0^\lor) \ar[d] \\
 \frac{T^*}{\Lambda} \ar[r]^<<<<<<{\varphi'} & \frac{S^*}{\langle s_0 \rangle} \otimes H^0(F_0^\lor) \ar[d] \\
 & 0
}
$$
Being $\image \: \varphi'' \simeq \left(\left(\langle s_0 \rangle \otimes H^0(F_0^\lor) \right) \cap T^* \right)$,
we have $\dim \image \varphi' =\dim \image \varphi -\dim \image \varphi'' < \dim X +(s-1)f_0$.

Suppose first $\dim X<(s-1)\cdot\rk(Q)$. Since $\varphi '$ also defines a Steiner bundle, it follows from Proposition \ref{thm-rank} that $\dim \image \varphi' \geq \dim X +(s-1)f_0$, leading us to contradiction.

Otherwise, when $\dim X\geq (s-1)\cdot\rk(Q)$, we get that $E'_0\simeq (S^* / \langle s_0)^*\rangle\otimes Q$. Therefore, $\tilde{E'_0}$ is the whole Segre variety (recall (\ref{eq-jump})) and $\Sigma(E'_0)=\PP\left((S^* / \langle s_0)^*\right)$. Then $\dim\Sigma(E)$ must be equal to $s-2$ and $\Sigma(E)$ is birational to $\Sigma(E'_0)$. Being $\Sigma(E)$ also birational to $\tilde{J}(E'_0)$, this leads to a contradiction.
\end{proof}
\begin{remark}
Except when $E$ is a trivial Steiner bundle, one deduces from the previous proposition that under the given hypothesis all fibers $\pi_1^{-1}(s_0)$, for each $s_0 \in \PP(S)$, have the same dimension, namely $f_0(t-\dim X - sf_0 +1) - (s-1)$. Hence, when $s\leq f_0+1$, the jumping variety $\tilde{J}(F)$ is the projectivization of a Grassmannian bundle constructed from a rational scroll on $\PP(S)$. In particular, it is smooth.
\end{remark}

Consider the two natural projections $\tilde{J}(E) \stackrel{\pi_1}{\longrightarrow} \PP(S)$ and $\tilde{J}(E) \stackrel{\pi_2}{\longrightarrow} \GG(f_0-1,\PP(H^0(F_0^\lor)^*))$. By Lemma \ref{lem-class} we obtain the following result.
\begin{thm}\label{basecase}
Let $E$ be a reduced $(F_0,\OO_X)$-Steiner bundle on a smooth projective variety $X$ such that  $\tilde{J}(E)$ has maximal dimension and $\sigma: X \longrightarrow \GG(f_0-1,\PP(H^0(F_0^\lor)))$ is generically finite. If $s\leq f_0+1$ then $E$ is a Schwarzenberger bundle defined by the triple $$(\tilde{J}(E), | \pi_2^*(\UU^\lor)|, \pi_1^*(\OO_{\PP(S)}(1))),$$ where $\UU \longrightarrow \GG(f_0-1,\PP(H^0(F_0^\lor)^*))$ denotes the universal bundle of rank $f_0$.
\end{thm}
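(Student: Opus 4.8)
The plan is to obtain Theorem \ref{basecase} as a direct application of Lemma \ref{lem-class}(iii) with $a=1$ and $b=f_0$; essentially all the work lies in checking the cohomological hypotheses of that lemma. First I would collect what the preceding Proposition and Remark already provide: for $s\leq f_0+1$ the projection $\pi_1:\tilde{J}(E)\to\PP(S)$ is surjective, all of its fibres have the same dimension, and $\tilde{J}(E)$ is smooth and irreducible, being the projectivization of a Grassmannian bundle over a rational normal scroll on $\PP(S)$. In particular both $\pi_1$ and $\pi_2$ are morphisms defined on all of $\tilde{J}(E)$ and the fibres of $\pi_1$ are connected, so $\tilde{J}(E)$ is a genuine projective variety to which Definition \ref{defSch} can be applied.

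Next I would set up the candidate Schwarzenberger data. With $a=1$ and $b=f_0$ the universal quotient bundle $\mathcal{A}$ of rank $1$ on $G(1,S^*)\cong\PP(S)$ is $\OO_{\PP(S)}(1)$, while the universal quotient bundle $\mathcal{B}$ of rank $f_0$ on $G(f_0,H^0(F_0^\vee))\cong\GG(f_0-1,\PP(H^0(F_0^\vee)^*))$ is $\UU^\vee$. I take $Z=\tilde{J}(E)$, the morphism $\psi=\pi_2$ attached to the base-point-free system $|\pi_2^*\UU^\vee|$, and $L=\pi_1^*\OO_{\PP(S)}(1)$, which is globally generated. With these choices $\psi^*\UU^\vee=\pi_2^*\UU^\vee$, so the Schwarzenberger bundle produced by Lemma \ref{lem-class}(iii) is exactly the one asserted in the statement, and it remains only to verify that the three natural maps $\alpha$, $\beta$, $\alpha$ of that lemma are isomorphisms.

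These verifications I would treat in increasing order of difficulty. For $\mathcal{A}$, since $\pi_1$ is surjective with connected fibres one has $\pi_{1*}\OO_{\tilde{J}(E)}=\OO_{\PP(S)}$, so the projection formula gives $H^0(\tilde{J}(E),\pi_1^*\mathcal{A})\cong H^0(\PP(S),\mathcal{A})$ and the first map is an isomorphism. For $\mathcal{B}=\UU^\vee$ the map $\beta$ is an isomorphism precisely when $\psi=\pi_2$ is non-degenerate (injectivity) and linearly normal (surjectivity), which is exactly what Definition \ref{defSch} demands of $\psi$. For $\mathcal{Q}$, the Segre embedding $\nu$ identifies $\mathcal{Q}|_{\tilde{J}(E)}$ with $\pi_1^*\mathcal{A}\otimes\pi_2^*\mathcal{B}$ and $H^0(G(f_0,T_0^*),\mathcal{Q})$ with $T_0$, so the third map is the restriction $T_0\to H^0(\tilde{J}(E),\mathcal{Q}|_{\tilde{J}(E)})$, whose bijectivity is the linear normality of $\tilde{J}(E)$ inside $\GG(f_0-1,\PP(T_0))$.

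The hard part will be the two linear-normality statements, for $\beta$ and for the $\mathcal{Q}$-map, since one must show that restricting from the ambient Grassmannians to $\tilde{J}(E)$ neither destroys nor creates global sections. Here I would exploit that maximality of $\dim\tilde{J}(E)$, which (as $\sigma$ is generically finite, so $\dim\sigma(X)=\dim X$) means $\dim\tilde{J}(E)=f_0(t-\dim X-sf_0+1)$, the bound of Theorem \ref{thm-uppbound}, forces the intersection $\tilde{J}(E)=\image\nu\cap\GG(f_0-1,\PP(T_0))$ of \eqref{eq-jump} to have the expected codimension. Thus $\tilde{J}(E)$ is cut out on $\image\nu$ by the expected number of linear sections and its restricted cohomology is governed by the corresponding Koszul complex; combined with the known vanishing of the higher cohomology of $\mathcal{A}\boxtimes\mathcal{B}$ on the Segre image and with the minimal-degree, hence projectively normal, structure of $\tilde{J}(E)$ coming from the rational normal scroll, this yields the two remaining isomorphisms. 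Once the three maps are isomorphisms, Lemma \ref{lem-class}(iii) identifies the reduced summand $E_0$ with the Schwarzenberger bundle of the stated triple; since $E$ is reduced, $E_0=E$, and the theorem follows.
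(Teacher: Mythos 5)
Your proposal follows essentially the same route as the paper: after the preceding Proposition (surjectivity of $\pi_1$ for $s\leq f_0+1$) and Remark (the structure of $\tilde{J}(E)$ as the projectivization of a Grassmannian bundle over a rational scroll), the theorem is obtained by applying Lemma \ref{lem-class}(iii) with $a=1$, $b=f_0$, identifying $\mathcal{A}=\OO_{\PP(S)}(1)$, $\mathcal{B}=\UU^\vee$ and the triple $(\tilde{J}(E),|\pi_2^*\UU^\vee|,\pi_1^*\OO_{\PP(S)}(1))$, and using reducedness to conclude $E=E_0$. The paper in fact invokes the lemma without spelling out the verification of its three isomorphism hypotheses, so your explicit checks (projection formula for $\alpha$, non-degeneracy/linear normality for $\beta$, and the expected-codimension/minimal-degree argument for the $\mathcal{Q}$-map) only add detail to, rather than deviate from, the paper's argument.
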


This proves part $(ii)$ of Theorem \ref{thm-classif}.

\subsection{The general case}
Recall diagram (\ref{diag-clas}) and the fact that each first component projection has dimension either $0$ or at least $f_0$. We will thus divide the classification in the cases $f_0=1$ and $f_0>1$.

\vspace{3mm}

\noindent\textbf{Case $f_0 = 1$}{\ }

If $f_0 =1$ and $s=2$ we have already proved in Theorem \ref{basecase} that $\tilde{J}(E)$ is a rational normal scroll, $\Sigma(E)=\PP^1$ and, moreover, $E$ is Schwarzenberger bundle.

It was proved in \cite{AM} that if $\pi_1$ is not birational then each further projection on the first component, given by the iteration process, is not birational.

Let us consider first the case when all projections $\pi_1$ are birational. Applying the induction technique until the case $s=2$, we obtain that $\tilde{J}(E)$ is a rational normal curve because it is birational to $\PP^1$. So, considering the following diagram
$$
\xymatrix{
\tilde{J}(E)\ar[d]_{\pi_1} \ar[r]^<<<<<{\pi_2} & \PP(H^0(F_0^\lor))=: \PP^N\\
\PP^1
}
$$
we get, by Lemma \ref{lem-class}, that $E$ is Schwarzenberger bundle on $X$ defined by the triple $$\left(\tilde{J}(E), |\pi_2^*(\OO_{\PP^N}(1))|, \pi_1^*(\OO_{\PP^1}(s-1))\right).$$

Let us suppose now that the birationality is broken at some step of the induction. Without loss of generality, we can focus on such step and we are in the following situation (+1 will denote that the fiber is one dimensional):
$$
\xymatrix{
& & \tilde{J}(E) \ar[dl]_{bir} \ar[d]^{bir}\\
\mbox{step} \,\, s &\Sigma(E) \ar[d]_{+1} \ar@{--}[r]^{bir} & \tilde{J}(E_0') \ar@{-->}[dd] \ar[dl]_{+1}\\
\mbox{step} \,\, s-1 & \PP^{s-2} \simeq \Sigma(E_0')\ar@{-->}[dd] & \\
& & \tilde{J}(\bar{E}_0)\ar[dl]_{+1}\\
\mbox{step} \,\, 2 & \PP^1
}
$$
From the classification of the case $s=2$, we have that $\tilde{J}(\bar{E}_0)$ is a surface. One can prove that it is a quadric surface, which implies that $J(\bar{E})\simeq\PP^1$. Therefore, also $J(E)\simeq \PP^1$ and $\tilde{J}(E)$ is a rational normal scroll over $\PP^1$. Denoting as usual  $\pi_1:\tilde{J}(E)\longrightarrow \Sigma(E)$ and $
\pi_2:\tilde{J}(E)\longrightarrow\PP^1$,
we obtain by Lemma \ref{lem-class} that, if $s\geq 4$, $E$ is the Schwarzenberger bundle defined by the triple $$\left(\tilde{J}(E), |\pi_2^*(\OO_{\PP^1}(1))|, \pi_1^*(\OO_{\Sigma(E)}(1))\right).$$

Let us consider the only left case when $f_0=1$, that is the case $s=3$, described by the following diagram:
$$
\xymatrix{
&& \tilde{J}(E) \ar[dl]_{bir} \ar[d]^{bir} \ar[dr]^{\pi_2} \\
\mbox{step} \,\, 3 &\PP^2 \ar[d]_{pr_{s_0}} & \tilde{J}(E_0') \ar[dl]_{+1} \ar[dr]^{+1} & J(E) \ar@{=}[d]\\
\mbox{step} \,\, 2 &\PP^1 & & J(E_0')
}
$$

We have already proved that $\tilde{J}(E_0')$ is a rational normal scroll of dimension 2, and, as shown in \cite{AM}, the variety $\tilde{J}(E)$ can be either a Hirzebruch surface or a Veronese surface.\\
In the first case $E$ is the Schwarzenberger bundle given by the triple
$$\left(\tilde{J}(E), |\pi_2^*(\OO_{\PP^1}(1))|, \pi_1^*(\OO_{\Sigma(E)}(1))\right).$$
In the second case we have that $E$ is the Schwarzenberger bundle given by the triple
$$
\left(\tilde{J}(E), |\pi_2^*(\OO_{\PP^2})|, \pi_1^*(\OO_{\PP^2}(1))\right).
$$

This proves parts $(i)$, $(iv)$ and $(v)$ of Theorem \ref{thm-classif}.
\vspace{3mm}\\
\textbf{Case $f_0 > 1$}{\ }

Let us study now the case with $f_0 >1$. Looking at the Diagram (\ref{diag-clas}), we recall it is impossible to get $\dim \Sigma(E) = \dim \Sigma(E_0') +1$, because we have already noticed that the fiber of the projections of type $\pi_1$ has dimension either zero or greater equal than $f_0$, which would lead to contradiction. This means that all the projections involved in the diagram are birational. Recall the following lemma proved in \cite{AM}, that also applies in the current situation.
\begin{lemma}
Let $E$ be an $(F_0,\OO_X)$-Steiner bundle on a smooth projective variety $X$ and $\tilde{J}(E)$ its jumping locus. Suppose that $\tilde{J}(E)$ is birational to $\Sigma(E)$ and, fixed a jumping pair $s_0\otimes \Gamma$, consider the first step of the induction (Diagram (\ref{diag-clas})). If the morphism $\pi_1'$ is an isomorphism then also $\pi_1$ is an isomorphism.
\end{lemma}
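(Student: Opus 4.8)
The plan is to study $\pi_1$ fibre by fibre, identifying each fibre with a Grassmannian and using the hypothesis on $\pi_1'$ to pin down its dimension. Write $T_0^*=\image\varphi$ and, for $A=\langle a_0\rangle\in\PP(S)$, put $V_A=\{v\in H^0(F_0^\vee)\mid a_0\otimes v\in T_0^*\}$. By the definition of a jumping pair the fibre of $\pi_1$ over $A$ is $\pi_1^{-1}(A)=\{B\mid A\otimes B\subset T_0^*\}=G(f_0,V_A)$, so it is a single reduced point precisely when $\dim V_A=f_0$, and otherwise has dimension at least $f_0$. The same description applies to $\pi_1'$, now with $(T_0')^*=\image\varphi'=(pr\otimes\mathrm{id})(T_0^*)$ and $V'_{\bar A}=\{v\mid\bar a_0\otimes v\in(T_0')^*\}$, where $pr\colon S^*\to S^*/\langle s_0\rangle$ is the projection defining the induction.

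First I would dispose of every $A=\langle a_0\rangle\in\Sigma(E)$ with $A\neq s_0$. Membership $A\in\Sigma(E)$ gives $\dim V_A\geq f_0$; applying $pr\otimes\mathrm{id}$ to $a_0\otimes V_A\subset T_0^*$ gives $\bar a_0\otimes V_A\subset(T_0')^*$ with $\bar a_0\neq0$ (this is the only use of $A\neq s_0$), so $\bar A:=pr_{s_0}(A)$ lies in $\Sigma(E')=\Sigma(E_0')$ and $V_A\subseteq V'_{\bar A}$. Because $\pi_1'$ is an isomorphism, $\dim V'_{\bar A}=f_0$; the sandwich $f_0\leq\dim V_A\leq\dim V'_{\bar A}=f_0$ then forces $\dim V_A=f_0$ and even $V_A=V'_{\bar A}$. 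Hence $\pi_1$ is bijective over $\Sigma(E)\setminus\{s_0\}$ with inverse $A\mapsto\bigl(A,(\pi_2'\circ(\pi_1')^{-1})(pr_{s_0}(A))\bigr)$, and since $\Sigma(E)$ is smooth (a variety of minimal degree) this birational morphism is an isomorphism there.

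Next I would record a rigidity that organises the behaviour at $s_0$: the previous step yields the splitting $T_0^*\cap(\langle a_0,s_0\rangle\otimes H^0(F_0^\vee))=a_0\otimes V_A\oplus s_0\otimes V_{s_0}$ (the inclusion is clear and equality holds by the dimension count using $\dim V_A=\dim V'_{\bar A}=f_0$). For a line $\ell=\{\langle a_0+\lambda s_0\rangle\}\subset\Sigma(E)$ through $s_0$ the plane $\langle a_0+\lambda s_0,s_0\rangle=\langle a_0,s_0\rangle$ is independent of $\lambda$; comparing this identity for two values of $\lambda$ forces $V_{A_\lambda}$ to be a fixed subspace $V_\ell\subseteq V_{s_0}$. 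Thus $\pi_2\circ\pi_1^{-1}$ is constant along each line through $s_0$, exactly as $pr_{s_0}$ contracts it, so the section above is literally $(\pi_2'\circ(\pi_1')^{-1})\circ pr_{s_0}$ and the whole problem reduces to the single fibre $G(f_0,V_{s_0})$ over $s_0$, that is, to proving $\dim V_{s_0}=f_0$.

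Finally I would close the gap at $s_0$. Since $\tilde J(E)$ has maximal dimension, Theorem \ref{thm-uppbound} forces $\dim T_\Lambda\tilde J(E)=\dim\tilde J(E)$ for every $\Lambda$, so $\tilde J(E)$ is smooth; being a birational morphism of smooth projective varieties whose only possible exceptional locus is $G(f_0,V_{s_0})$, the map $\pi_1$ is an isomorphism as soon as that fibre is a point. To obtain this I would resolve $pr_{s_0}$ on the blow-up $Bl_{s_0}\Sigma(E)$ and argue that $(\pi_2'\circ(\pi_1')^{-1})\circ\widetilde{pr_{s_0}}$ is constant on the exceptional divisor, whence the section extends to a morphism $\Sigma(E)\to\tilde J(E)$ whose image is an irreducible subvariety of dimension $\dim\Sigma(E)=\dim\tilde J(E)$, hence all of $\tilde J(E)$; a surjective section of $\pi_1$ makes $\pi_1$ an isomorphism. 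The step I expect to be genuinely hard is precisely this last one — excluding a positive-dimensional Grassmannian fibre over the centre of the projection — because it cannot follow from soft birational geometry alone (a blow-up of a smooth point is consistent with everything proved so far) and must use the minimal-degree geometry of $\Sigma(E)$ together with $s_0$ being an inner smooth point, exactly as in the inner-projection analysis of \cite{AM}.
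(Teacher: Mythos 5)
Your fibre-by-fibre reduction is sound and reproduces what is genuinely easy here: identifying $\pi_1^{-1}(A)$ with $G(f_0,V_A)$, pushing $a_0\otimes V_A$ into $(T_0')^*$ for $A\neq s_0$, and using the hypothesis that $\pi_1'$ is an isomorphism to force $\dim V_A=f_0$ is correct (modulo the harmless slip that a variety of minimal degree need only be normal rather than smooth, which still suffices for Zariski's main theorem). But the proof is incomplete, and you say so yourself: nothing you establish excludes $\dim V_{s_0}>f_0$, i.e.\ a positive-dimensional fibre $G(f_0,V_{s_0})$ over the centre $s_0$ of the projection. That is precisely the content of the lemma. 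Note that $\dim V_{s_0}=f_0$ is equivalent to the induced bundle $E'$ being reduced: since $\varphi$ is injective, $\ker\varphi'\cong s_0\otimes(V_{s_0}/\Gamma)$, while the hypothesis on $\pi_1'$ concerns only $\tilde{J}(E_0')=\tilde{J}(E')$, i.e.\ only the image $(T_0')^*$, so it gives no direct control on this kernel. A birational morphism contracting such a fibre (a blow-down-type map) is compatible with everything you prove, as you yourself observe; your proposed repair --- extending the section $(\pi_2'\circ(\pi_1')^{-1})\circ pr_{s_0}$ across the blow-up and proving it is constant on the exceptional divisor --- is exactly the missing argument, asserted but never carried out. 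Two further points: your ``rigidity'' paragraph presupposes lines through $s_0$ contained in $\Sigma(E)$, which need not exist (for instance when $\Sigma(E)$ is a rational normal curve), and your smoothness claim for $\tilde{J}(E)$ invokes maximality of $\dim\tilde{J}(E)$ via Theorem \ref{thm-uppbound}, which is not among the hypotheses of the lemma as stated.

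For the comparison you were asked to make: the paper itself contains no proof of this statement --- it is quoted as proved in \cite{AM} and said to ``also apply in the current situation'' --- so the benchmark is the cited Arrondo--Marchesi argument, whose substance is exactly the step you leave open: the analysis of the inner projection from the point $s_0$ of the minimal-degree variety $\Sigma(E)$, ruling out a jump of $V_{s_0}$ over the centre. Your attempt correctly isolates the hard half of that argument but does not supply it, so as it stands the lemma is not proved.
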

The combination of this lemma and the birationality of the projections imply that $\tilde{J}(E) \simeq \Sigma(E)$, and so $E$ is the Schwarzenberger bundle given by the triple
$$
\left(\tilde{J}(E), |\pi_2^*(\UU^\lor)|, \OO_{\tilde{J}(E)}(1)\right),
$$
proving $(iii)$ in the theorem.

\bigskip

We have completed the study of all possible cases  and we have thus proved Theorem \ref{thm-classif}.

%

\bibliographystyle{alpha}
\bibliography{AMSSchwsmooth}

\end{document}